\newcommand{\nc}{\newcommand}
 \nc{\cl}{\centerline}
 \renewcommand{\l}{{\rm len}}
 \nc{\SL}{{\rm SL}}
 \nc{\hatQ}{{\hat Q}}
 \nc{\sgn}{{\rm sgn}}
 \nc{\seee}{\mathbb C}
 \newcommand{\id}{{\rm id}}
  \newcommand{\stind}{{\rm stind}}
 \nc{\hatlambda}{{\hat\lambda}}
 \nc{\daggerlambda}{{\lambda^\dagger}}
  \newcommand{\dotG}{{\dot{G}}}
    \newcommand{\dotL}{{\dot{L}}}
  \newcommand{\dotnabla}{{\dot{\nabla}}}
    \newcommand{\dotI}{{\dot{I}}}
  \newcommand{\resp}{{resp.\,}}
  \newcommand{\Hec}{{\rm Hec}}
\nc\diag{{\rm diag}}
\renewcommand{\vert}{{\,|\,}}
\nc{\hatL}{{\hat L}}
\nc{\barE}{{\bar   E}}
\nc{\D}{{\mathcal D}}
\nc{\E}{{\mathcal E}}
\nc{\F}{{\mathcal F}}
\nc{\FF}{{\mathcal F}}
\nc{\I}{{\mathcal I}}
\nc{\even}{{\rm e}}
\nc{\ep}{\epsilon}
\nc{\odd}{{\rm o}}
\nc{\Coker}{{\rm Coker}}
\nc{\olE}{{\overline E}}
\nc{\indBG}{{\rm ind}_B^G\,}
\nc{\indHG}{{\rm ind}_H^G\,}
\nc{\que}{{\mathbb Q}}
\nc{\barlambda}{{\bar\lambda}}
\nc{\barmu}{{\bar\mu}}
\nc{\barnu}{{\bar\nu}}
\nc{\bartau}{{\bar\tau}}
\nc{\barm}{{\bar m}}
\nc{\divind}{{\rm div.ind}}
\nc{\tl}{{\tilde{\lambda}}}
\nc{\dar}{\downarrow}
\nc{\Sym}{{\rm \Sigma}}
\nc{\Symm}{{\rm Sym}}
\newcommand{\q}{\quad}
\newcommand{\de}{\delta}
\newcommand{\Mod}{{\rm Mod}}
\renewcommand{\mod}{{\rm mod}}
\newcommand{\Sp}{{\rm Sp}}
\newcommand{\bs}{\bigskip}
\renewcommand{\vert}{\,|\,}
\renewcommand{\sgn}{{\rm sgn}}
\newcommand{\pol}{{\rm pol}}
\newcommand{\ind}{{\rm ind}}
\renewcommand{\vert}{\,|\,}
\newcommand{\zed}{{\mathbb Z}}
\newcommand{\Ext}{{\rm Ext}}
\newcommand{\Hom}{{\rm Hom}}
\newcommand{\cf}{{\rm cf}}
\renewcommand{\mod}{{\rm mod}}
\newcommand{\GL}{{\rm GL}}
\renewcommand{\mod}{{\rm{mod}}}
\nc{\geom}{{\rm geom}}
\nc{\rep}{{\rm rep}}
\newtheorem{definition}{Definition}[section]
\newtheorem{proposition}[definition]{Proposition}
\newtheorem{theorem}[definition]{Theorem}
\newtheorem{lemma}[definition]{Lemma}
\newtheorem{corollary}[definition]{Corollary}
\newtheorem{example}[definition]{Example}
\newtheorem{examples}[definition]{Examples}
\newtheorem{remark}[definition]{Remark}
\begin{document}

%sec0.tex

\centerline{\bf Injective Schur Modules}

\bigskip

\centerline{Stephen Donkin  and Haralampos Geranios}

\bigskip

{\it Department of Mathematics, University of York, York YO10 5DD}\\

\medskip

{\tt stephen.donkin@york.ac.uk,  haralampos.geranios@york.ac.uk}

\bs

\centerline{16  March 2017}
\bs\bs\bs

\section*{Abstract}

\q  We determine the  partitions $\lambda$ for which  the  corresponding induced module (or Schur module in the language of Buchsbaum et. al., \cite{ABW}) $\nabla(\lambda)$ is injective in the category of polynomial modules for a general linear group over an infinite field,  equivalently  which Weyl modules are projective polynomial modules. Since the problem is essentially no more difficult in the quantised case we address it at this level of generality.  Expressing our results in terms of the representation theory of Hecke algebras at the parameter $q$  we determine the  partitions $\lambda$ for which  the corresponding Specht module is a Young module,  when $1+q\neq 0$. In the classical case this problem was addressed by D. Hemmer, \cite{Hem}.   The nature of the set of partitions appearing in our solution gives a new formulation of Carter's condition on regular partitions. On the other hand, we note, in Remark 2.22,  that the result on irreducible Weyl modules for the quantised Schur algebra  $S_q(n,n)$,  \cite{MathasBook}, Theorem 5.39, given in terms of Carter partitions, may be also used to obtain the main result presented here.

\section*{Introduction}

\q   Let $K$ be a field and  $0\neq q\in K$. Let  $G(n)$ be the corresponding quantum general linear   of degree $n$, as in, for example \cite{D3}.  Let $T(n)$ denote the algebraic torus and $B(n)$  the Borel subgroup, as in \cite{D3}. Each partition $\lambda$ with at most $n$ parts determines a one dimensional $T(n)$-module $K_\lambda$  and the module structure  extends uniquely to give the structure of a  $B(n)$-module. The irreducible polynomial representations of $G(n)$ parametrised by partitions with at most $n$ parts and we write $L_n(\lambda)$ for the irreducible module corresponding to the partition $\lambda$. We set $\nabla_n(\lambda)$ to be the induced module $\ind_{B(n)}^{G(n)}  K_\lambda$. This module contains a unique copy of the irreducible module $L_n(\lambda)$. We write $I_n(\lambda)$ for the injective envelope of  $L_n(\lambda)$ and we have an embedding of  $\nabla_n(\lambda)$ into $I_n(\lambda)$. In this paper we give a combinatorial description of those $\lambda$ such that this embedding is an isomorphism and so the induced module $\nabla_n(\lambda)$ is injective as a polynomial module. Also, considering the contravariant duals of these modules, in the sense of \cite{EGS} and \cite{D3}, we describe which Weyl modules $\Delta_n(\lambda)$ are projective as polynomial modules of $G(n)$.  In the last section of this paper we express our results in terms of the representation theory of Hecke algebras and we determine the partitions $\lambda$ for which the corresponding Specht module $\Sp(\lambda)$ is a Young module,  in case $q\neq -1$. For another approach for the latter result in the classical case see  \cite{Hem}.

\q We deal with the preliminary material in Section 1.  In Section 2 we give an explicit description of the partitions satisfying the above injectivity condition. We then  show that these partitions are exactly those satisfying Carter's condition. 
In Section 3, we relate the forgoing material to results on  Specht modules and Young modules for Hecke algebras.

\bs\bs\bs

%section1

\section{Preliminaries}

\subsection{Combinatorics}

\q The  standard  reference for  the polynomial representation theory of  \\
$\GL_n(K)$ is the monograph \cite{EGS}.   Though we work in the quantised context this reference is appropriate  as the combinatorial set-up  is  essentially the same and we adopt the notation of \cite{EGS} wherever  convenient.  Further details may also be found in the monograph \cite{D3}, which treats the quantised case.

\q By a partition we mean an infinite  sequence $\lambda=(\lambda_1,\lambda_2,\ldots)$ of nonnegative integers with $\lambda_1\geq\lambda_2\geq \ldots$ and $\lambda_j=0$ for all $j$ sufficiently large.   If $m$ is a positive integer such that $\lambda_j=0$ for $j>m$ we identify $\lambda$ with the finite sequence $(\lambda_1,\ldots,\lambda_m)$.  The length   $\l(\lambda)$ of  a partition $\lambda=(\lambda_1,\lambda_2,\ldots)$ is $0$ if $\lambda=0$ and  is the positive integer $m$ such that  $\lambda_m\neq 0$, $\lambda_{m+1}=0$, if $\lambda\neq 0$. For a partition $\lambda$, we denote  by $\lambda'$ the transpose partition of $\lambda$.  We define the   degree of a partition $\lambda=(\lambda_1,\lambda_2,\ldots)$ by $\deg(\lambda)=\lambda_1+\lambda_2+\cdots$.

\q We set $X(n)=\zed^n$. There is a natural partial order on $X(n)$. For $\lambda=(\lambda_1,\ldots,\lambda_n), \mu=(\mu_1,\ldots,\mu_n)\in X(n)$,  we write $\lambda\leq \mu$ if $\lambda_1+\cdots+\lambda_i\leq \mu_1+\cdots+\mu_i$ for $i=1,2,\ldots,n-1$ and $\lambda_1+\cdots+\lambda_n=\mu_1+\cdots+\mu_n$. We shall use the standard $\zed$-basis   $\ep_1,\ldots,\ep_n$ of   $X(n)$. Thus  $\ep_i=(0,\ldots,1,\ldots,0)$ (with $1$ in the $i$th position), for $1\leq i\leq n$.     We  write $\omega_i$ for the element $\ep_1+\cdots+\ep_i$ of $X(n)$, for $1\leq i\leq n$, and set $\omega_0=0$.  We write $\Lambda(n)$ for the set of $n$-tuples of nonnegative integers.   We write $X^+(n)$ for the set of dominant $n$-tuples of integers, i.e., the set of elements $\lambda=(\lambda_1,\ldots,\lambda_n)\in X(n)$ such that $\lambda_1\geq \cdots\geq  \lambda_n$. 

\q We write  $\Lambda^+(n)$ for the set of partitions into at most $n$-parts, i.e.,  $\Lambda^+(n)=X^+(n)\bigcap \Lambda(n)$.   For a nonnegative integer $r$ we write $\Lambda^+(n,r)$ for the set of partitions of $r$ into at most $n$ parts, i.e., the set of elements of $\Lambda^+(n)$ of degree $r$.

%The symmetric group $W={\rm Sym}(n)$ acts naturally on $X(n)$.  We write $w_0$ for the longest element of  $W$,  i.e., the element such that $w_0\lambda=(\lambda_n,\ldots,\lambda_1)$, for $(\lambda_1,\ldots,\lambda_n)\in X(n)$. %We need also the partition $\delta=(n-1,n-2,\dots,1,0)$. We have the so-called \lq\lq dot action'' of $W$ on $X(n)$ given by $w\cdot \lambda=w(\lambda+\delta)-\delta$, for $\lambda\in X(n)$ and $w\in W$.
 
  %We write $\Phi$ for the set of all $\epsilon_i-\epsilon_j$ with $1\leq i,j\leq n$, $i\neq j$. We write $\Phi^+$ for the set of all $\epsilon_i-\epsilon_j$ with $1\leq i< j\leq n$ and $\Pi$ for the set of all $\epsilon_i-\epsilon_{i+1}$ for $1\leq i\leq n-1$.  The action of $W$ on $X(n)$ extends to an $\real$-linear action on $\real\otimes_\zed X(n)$.
%We choose the symmetric, bilinear, positive definite $W$-invariant form $(,)$ on  $\real\otimes_\zed X(n)$ with $(\epsilon_i,\epsilon_j)=\delta_{ij}$ for $1\leq i,j\leq n$. For $\alpha \in \Phi$ we set $\check{\alpha}=2\alpha/(\alpha,\alpha)\in  \real\otimes_\zed X(n)$.

 \subsection{Rational and Polynomial Modules}

 \q Appropriate references for the set-up described here are \cite{DD}, \cite{DStd}, \cite{D3}.  Let $K$ be a field. If $V,W$ are vector spaces over $K$, we write $V\otimes W$ for the tensor product $V\otimes_K W$.   We shall be working with the representation theory of quantum groups over $K$. By the category of quantum groups over $K$ we understand the opposite category of the category of Hopf algebras over $K$. Less formally we shall use the expression \lq\lq $G$ is a quantum group" to indicate that we have in mind a Hopf algebra over $K$ which we denote $K[G]$ and call the coordinate algebra of $G$.  We say that $\phi:G\to H$ is a morphism of quantum groups over $K$ to indicate that we have in mind a morphism of Hopf algebras over $K$, from $K[H]$ to $K[G]$, denoted $\phi^\sharp$ and called the co-morphism of $\phi$. We will say $H$ is a quantum subgroup of the quantum group $G$, over $K$, to indicate that $H$ is a quantum group with coordinate algebra $K[H]=K[G]/I$, for some Hopf ideal $I$ of $K[G]$, which we call the defining ideal of $H$.  The inclusion morphism $i:H\to G$ is the morphism of quantum groups whose co-morphism $i^\sharp:K[G]\to K[H]=K[G]/I$ is the natural map. 

\q Let $G$ be a quantum group over $K$. The category of  left (\resp  right) $G$-modules is the the category of right (\resp  left) $K[G]$-comodules.  We write $\Mod(G)$ for the category of left $G$-modules and $\mod(G)$ for the category of finite dimensional left $G$-modules.  We shall also call a $G$-module a rational $G$-module (by analogy with the representation theory of algebraic groups).  A $G$-module will mean a left $G$-module unless  indicated otherwise. For a finite dimensional $G$-module $V$ the dual space $V^*=\Hom_K(V,K)$ has a natural $G$-module structure.
For a finite dimensional $G$-module $V$ and a non-negative integer $r$ we write $V^{\otimes r}$ for the $r$-fold tensor product $V\otimes V\otimes \cdots \otimes V$ and write $V^{\otimes -r}$ for the dual of $V^{\otimes r}$.

 \q Let $V$ be a finite dimensional $G$-module with structure map $\tau: V\to V\otimes K[G]$.  The coefficient space $\cf(V)$ of $V$ is the subspace of $K[G]$ spanned by the \lq\lq coefficient elements" $f_{ij}$, $1\leq i,j\leq m$, defined with respect to a basis $v_1,\ldots, v_m$ of $V$, by the equations 
 $$\tau(v_i)=\sum_{j=1}^m v_j\otimes f_{ji}$$
 for $1\leq i\leq m$.  The coefficient space $\cf(V)$ is independent of the choice of basis and is a subcoalgebra of $K[G]$.

\q We fix $0\neq q\in K$. For a positive integer $n$  we shall be working with the corresponding quantum general linear group $G(n)$, as in \cite{D3}.   
We have a $K$-bialgebra $A(n)$ given by generators $c_{ij}$, $1\leq i,j\leq n$, subject to certain relations (depending on $q$), as in \cite{D3}, 0.20.  The comultiplication map  $\delta:A(n)\to A(n)\otimes A(n)$ satisfies $\de(c_{ij})=\sum_{r=1}^n c_{ir}\otimes c_{rj}$ and the augmentation map $\ep:A(n)\to K$ satisfies $\ep(c_{ij})=\de_{ij}$ (the Kronecker delta), for $1\leq i,j\leq n$.  The elements $c_{ij}$ will be called the coordinate elements and we define the determinant element
$$d_n=\sum_{\pi\in \Symm(n)} \sgn(\pi) c_{1,\pi(1)}\ldots c_{n,\pi(n)},$$
where $\sgn(\pi)$ denotes the sign of the permutation $\pi$. We form the Ore localisation $A(n)_{d_n}$. The comultiplication map $A(n)\to A(n)\otimes A(n)$ and augmentation map $A(n)\to K$ extend uniquely to $K$-algebraic maps $A(n)_{d_n}\to A(n)_{d_n}\otimes A(n)_{d_n}$ and $A(n)_{d_n}\to K$, giving $A(n)_{d_n}$ the structure of a Hopf algebra. By the quantum general linear group $G(n)$ we mean the quantum group over $K$ with coordinate algebra $K[G(n)]=A(n)_{d_n}$.

\q We write $T(n)$ for the quantum subgroup of $G(n)$ with defining ideal generated by all $c_{ij}$ with $1\leq i,j\leq n$, $i\neq j$.  We write $B(n)$ for quantum subgroup of $G(n)$ with defining ideal generated by all $c_{ij}$ with $1\leq i <j\leq n$. We call $T(n)$ a maximal torus and $B(n)$ a Borel subgroup of $G(n)$ (by analogy with the classical case).

\q We now recall the weight space decomposition of a finite dimensional  $T(n)$-module.  For $1\leq i\leq n$ we define ${\bar c}_{ii}=c_{ii}+I_{T(n)}\in K[T(n)]$, where $I_{T(n)}$ is the defining ideal of the quantum subgroup $T(n)$ of $G(n)$.  Note that ${\bar c}_{11}\ldots {\bar c}_{nn}=d_n+I_{T(n)}$, in particular each ${\bar c}_{ii}$ is invertible in $K[T(n)]$. For $\lambda=(\lambda_1,\ldots,\lambda_n)\in X(n)$ we define ${\bar c}^\lambda={\bar c}_{11}^{\lambda_1}\ldots {\bar c}_{nn}^{\lambda_n}$. The elements ${\bar c}^\lambda$, $\lambda\in X(n)$, are group-like and form a $K$-basis of $K[T(n)]$. 
For $\lambda=(\lambda_1,\ldots,\lambda_n)\in X(n)$, we write $K_\lambda$ for $K$ regarded as a (one dimensional) $T(n)$-module with structure map $\tau:K_\lambda\to K_\lambda\otimes K[T(n)]$ given by $\tau(v)=v\otimes {\bar c}^\lambda$, $v\in K_\lambda$.  For a finite dimensional  rational $T(n)$-module $V$ with structure map $\tau:V\to V\otimes K[T(n)]$  and $\lambda\in X(n)$ we have the weight space 
$$V^\lambda=\{v\in V\vert \tau(v) =v\otimes {\bar c}^\lambda\}.$$  
Moreover, we have the weight space decomposition $V=\bigoplus_{\lambda\in X(n)} V^\lambda$. 
We say that $\lambda\in X(n)$ is a weight of $V$ if $V^\lambda\neq 0$.

\q For each $\lambda\in X^+(n)$ there is an irreducible rational $G(n)$-module $L_n(\lambda)$ which has unique highest weight $\lambda$ and such $\lambda$ occurs as a weight with multiplicity one. The modules $L_n(\lambda)$, $\lambda\in X^+(n)$, form a complete set of pairwise non-isomorphic irreducible rational  $G(n)$-modules. 
 Note that for $\lambda=(\lambda_1,\ldots,\lambda_n)\in X^+(n)$ the dual module $L_n(\lambda)^*$ is isomorphic to $L_n(\lambda^*)$, where $\lambda^*=(-\lambda_n,\ldots,-\lambda_1)$. For a finite dimensional rational $G(n)$-module $V$ and $\lambda\in X^+(n)$ we write $[V:L_n(\lambda)]$ for the multiplicity of $L_n(\lambda)$ as a composition factor of $V$. 

\q We write $D_n$ for the one dimensional $G(n)$-module corresponding to the determinant. Thus $D_n$ has structure map $\tau:D_n\to D_n\otimes K[G]$, given by $\tau(v)=v\otimes d_n$, for $v\in D_n$. 
Thus we have  $D_n=L_n(\omega)=L_n(1,1,\ldots,1)$. We write $E_n$ for the natural $G(n)$-module. Thus  $E_n$ has basis $e_1,\ldots,e_n$,  and the structure map $\tau:E_n\to   E_n\otimes K[G(n)]$ is given by $\tau(e_i)=\sum_{j=1}^n e_j\otimes c_{ji}$.

\q A finite dimensional $G(n)$-module $V$ is called polynomial if $\cf(V) \leq A(n)$. The modules $L_n(\lambda)$, $\lambda\in \Lambda^+(n)$, form a complete set of pairwise non-isomorphic irreducible polynomial $G(n)$-modules. We write $I_n(\lambda)$ for the injective envelope of $L_n(\lambda)$ in the category of polynomial modules.  We have a grading   $A(n)=\bigoplus_{r=0}^\infty  A(n,r)$ in  such a way that each $c_{ij}$ has degree $1$. Moreover each $A(n,r)$ is a finite dimensional subcoalgebra of $A(n)$. The dual algebra $S(n,r)$ is known as the $q$-Schur algebra.  A finite dimensional $G(n)$-module $V$ is polynomial of degree $r$ if $\cf(V)\leq A(n,r)$.  We write $\pol(n)$ (\resp  $\pol(n,r)$)  for the full subcategory of $\mod(G(n))$ whose objects are the polynomial modules (\resp  the modules which are polynomial of degree $r$).

\q For an arbitrary finite dimensional polynomial $G(n)$-module we may write $V$ uniquely as a direct sum $V=\bigoplus_{r=0}^\infty V(r)$ in such a way that $V(r)$ is polynomial of degree $r$, for $r\geq 0$. The modules $L_n(\lambda)$, $\lambda\in\Lambda^+(n,r)$, form a complete set of pairwise non-isomorphic irreducible polynomial $G(n)$-modules which are polynomial of degree $r$.  We write $\mod(S)$ for the category of left modules for a finite dimensional $K$-algebra $S$.  The category 
 $\pol(n,r)$ is naturally equivalent to the category $\mod(S(n,r))$.    It follows in particular  that, for $\lambda\in \Lambda^+(n,r)$, the module $I_n(\lambda)$ is a finite dimensional module which is polynomial of degree $r$.

\q We shall also need modules induced from $B(n)$ to $G(n)$.  (For details of the induction functor $\Mod(B(n))\to \Mod(G(n))$ see, for example, \cite{DStd}).  For $\lambda\in X(n)$ there is a unique (up to isomorphism) one dimensional $B(n)$-module whose restriction to $T(n)$ is  $K_\lambda$. We also denote this module by $K_\lambda$. The induced module $\ind_{B(n)}^{G(n)} K_\lambda$ is non-zero if and only if $\lambda\in X^+(n)$. For $\lambda\in X^+(n)$ we set $\nabla_n(\lambda)=\ind_{B(n)}^{G(n)}  K_\lambda$. Then $\nabla_n(\lambda)$ is finite dimensional  (and its character is the Schur symmetric function corresponding to $\lambda$).  The $G(n)$-module socle of $\nabla_n(\lambda)$ is $L_n(\lambda)$. The module $\nabla_n(\lambda)$ has unique highest weight $\lambda$ and this weight occurs with multiplicity one.

\q A filtration $0=V_0\leq V_1\leq \cdots\leq V_r=V$ of  a finite dimensional rational $G(n)$-module $V$ is said to be {\it{good}} if for each $1\leq i\leq r$ the quotient $V_i/V_{i-1}$ is either zero or isomorphic to $\nabla_n(\lambda^i)$ for some $\lambda^i\in X^+(n)$.  For a rational $G(n)$-module $V$ admitting a good filtration, for each $\lambda\in X^+(n)$ the multiplicity 
$|\{1\leq i\leq r\vert V_i/V_{i-1}\cong \nabla_n(\lambda)\}|$ is independent of the choice of the good filtration, and will be denoted $(V:\nabla_n(\lambda))$. 

% For $\lambda,\mu\in X^+(n)$ we have $\Ext^1_{G(n)}(\nabla_n(\lambda),\nabla_n(\mu))=0$ unless $\lambda>\mu$. Given Kempf's Vanishing Theorem, \cite{D3},  Theorem 3.4,  this follows exactly as in the classical case, e.g., \cite{D4},  Lemma 3.2.1 (or the original source \cite{CPS}, Corollary (3.2)). It  follows that if $V$ has a good  filtration 
%$0=V_0\leq V_1\leq \cdots\leq V_t =V$ with sections $V_i/V_{i-1}\cong \nabla_n(\lambda_i)$, $1\leq i\leq t$, and $\mu_1,\ldots,\mu_t$ is a reordering of the $\lambda_1,\ldots,\lambda_t$ such that $\mu_i<\mu_j$ implies that $i<j$ then there is a good filtration $0=V_0'<V_1'<\cdots <V_t'=V$ with $V_i'/V_{i-1}'\cong \nabla_n(\mu_i)$, for $1\leq i\leq t$. 

% Similarly it will be of great practical use  to know that\\$\Ext^1_{G(n)}(\nabla_n(\lambda),\nabla_n(\mu))=0$ when $\lambda$ and $\mu$ belong to different blocks. Here  the relationship with cores of partitions diagrams (discussed later) will be crucial for us.    

\q For a partition $\lambda$ we denote by $[\lambda]$ the corresponding partition diagram (as in \cite{EGS}). For a positive integer $l$, the $l$-core of $[\lambda]$ is the diagram obtained by removing skew $l$-hooks,  as in \cite{James}. If $\lambda,\mu\in \Lambda^+(n,r)$ and $[\lambda]$ and $[\mu]$ have different $l$-cores then the simple modules $L_n(\lambda)$ and $L_n(\mu)$ belong to different blocks and we have in particular  $\Ext^i_{S(n,r)}(\nabla(\lambda),\nabla(\mu))=0$, for all $i\geq 0$.  A precise  description of the blocks of the $q$-Schur algebras was found by Cox, see \cite{Cox},  Theorem 5.3.

\q  For $\lambda\in \Lambda^+(n)$  the module $I_n(\lambda)$ has a good filtration and we have the reciprocity formula $(I_n(\lambda):\nabla_n(\mu))=[\nabla_n(\mu):L_n(\lambda)]$ see e.g., \cite{DStd}, Section 4, (6). 

\q For $\lambda\in \Lambda^+(n,r)$ we write $\Delta_n(\lambda)$ for the Weyl module corresponding to the partition $\lambda$. Then  $\Delta_n(\lambda)$ is the contravariant dual of the induced module $\nabla_n(\lambda)$. We note by $P_n(\lambda)$ the projective cover of $L_n(\lambda)$ in the category of polynomial modules. The module  $P_n(\lambda)$ is the contravariant dual of $I_n(\lambda)$. 

\q If $q$ is not a root of unity or if $q=1$ and $K$ has characteristic $0$ then all $G(n)$-modules are semisimple, see e.g.,  \cite{DD}, (3.3.2) or \cite{DStd},  Section 4, (8) and so all the polynomial modules are polynomially injective. So the problem addressed in this paper is trivial in these  cases and we shall assume from now on  that $q$ is a root of unity and if $q=1$ then $K$ has positive characteristic.

\q Let $l$ be the   smallest positive integer such that $1+q+\cdots+q^{l-1}=0$.  Thus $l$ is the order of $q$ if $q\neq 1$ and $l$ is the characteristic  of $K$ if $q=1$ and $K$ has positive characteristic.

\subsection{Connections with the Hecke algebras}

\q We now record some connections with representations of  Hecke algebra of type $A$. We fix a positive integer $r$.  We write $\l(\pi)$ for the length of a permutation $\pi$.   The Hecke algebra $\Hec(r)$ is the $K$-algebra with basis $T_w$, $w\in \Symm(r)$, and multiplication satisfying

\begin{align*}&T_wT_{w'}=T_{ww'}, \quad \hbox{ if } \l(ww')=\l(w)+\l(w'), \hbox{and}\cr
&(T_s+1)(T_s-q)=0
\end{align*}
for $w,w'\in \Symm(r)$ and a basic transposition $s\in \Symm(r)$. For brevity  we will denote the Hecke algebra $\Hec(r)$ by $H(r)$.

\q Assume now $n\geq r$.  We have the Schur functor $f:\mod(S(n,r))\to \mod(H(r))$, see \cite{D3}, 2.1.  For $\lambda$ a partition of degree $r$ we denote  by $\Sp(\lambda)$ the corresponding (Dipper-James) Specht module and by $Y(\lambda)$ the corresponding Young module of $H(r)$. By \cite{D3}, Sections 4.4 and 4.5 we have  the following results.

\bs

{\bf Proposition 1.3.1}  The functor $f$ has the following properties :\\
(i) $f$ is exact;\\
(ii) for $\lambda\in \Lambda^+(n,r)$ we have $f\nabla_n(\lambda)=\Sp(\lambda)$;\\
(iii) for $\lambda\in \Lambda^+(n,r)$ we have $fI_n(\lambda)=Y(\lambda)$;\\

\bs

We will need some further  connections between the representations of the Schur algebras and the Hecke algebras. By \cite{DJ} Corollary 8.6 we have the following. 

\bs

{\bf Proposition 1.3.2}  \,    Let $l\geq 3$. For $\lambda,\mu\in\Lambda^+(n,r)$ we have:\\
(i) $\Hom_{S(n,r)}(\nabla_n(\lambda),\nabla_n(\mu))=\Hom_{H(r)}(\Sp(\lambda),\Sp(\mu))$;\\
(ii) If $\Hom_{H(r)}(\Sp(\lambda),\Sp(\mu))\neq0$ then $\lambda\geq \mu$.

\bs

\q A filtration $0=V_0\leq V_1\leq \cdots\leq V_s=V$ of  a finite dimensional $H(r)$-module $V$ is called a {\it{Specht filtration}}  if for each $1\leq i\leq r$ the quotient $V_i/V_{i-1}$ is isomorphic to $\Sp(\lambda^i)$ for some partition $\lambda^i$ of degree $r$. By \cite{D1} Proposition 10.6 or alternatively by \cite{HN} Theorem 3.7.1  we have that if $l\geq4$ and $V$ is an $H(r)$-module admitting a Specht filtration, then for each $\lambda$ of degree $r$ the multiplicity 
$|\{1\leq i\leq s\vert V_i/V_{i-1}\cong \Sp(\lambda))\}|$ is independent of the choice of the Specht filtration, i.e. these multiplicities are well defined.

\bs

 \q Let $\alpha\in \Lambda(n,r)$. We write $H(\alpha)$ for the subalgebra  $H(\alpha_1) \otimes\dots\otimes H(\alpha_n)$ of $H(r)$.  By \cite{D3} Section 4.4, (1), (iii) and  (3),(ii) and by  \cite{D1} Proposition 10.6, we have the following results. 

\bs 

{\bf Proposition 1.3.3}\\
(i) The modules $\{Y(\lambda)\mid \lambda\in\Lambda^+(n,r)\}$ are pairwise non-isomorphic and are precisely (up to isomorphism) the indecomposable summands of  the modules $H(r)\otimes_{H(\alpha)}k$;\\
(ii) Let $\lambda\in \Lambda^+(n,r)$. Then $Y(\lambda)$ has a Specht filtration $0=Y_0<Y_1<\dots<Y_s=Y(\lambda)$ with sections $Y^i/Y^{i-1}\cong\Sp(\mu^i)$ with $\mu^i\in \Lambda^+(n,r)$  for $1\leq i\leq s$ and $\mu^j<\mu^i$ implies that $j<i$  for $1\leq i,j\leq s$. Moreover  for each $\mu\in \Lambda^+(n,r)$ we have $|\{i\in[1,s]\mid\mu^i=\mu\}|=(I_n(\lambda):\nabla_n(\mu))$.
%(iii) Let $l\geq4$. For each $\mu\in \Lambda^+(n,r)$ the multiplicity $|\{i\in[1,s]\mid\mu^i=\mu\}|$ in a Specht filtration of $Y(\lambda)$ is independent of the choice of the filtration.

\bs\bs\bs

%section 2

\section{Injective Partitions}

 \q   We write $X_1(n)$ for the set of $l$-restricted partition into at most $n$ parts, i.e., the set of elements $\lambda=(\lambda_1,\ldots,\lambda_n)\in \Lambda^+(n)$ such that $0\leq \lambda_1-\lambda_2,\ldots,\lambda_{n-1}-\lambda_n, \lambda_n<l$.

\q Let $\lambda\in \Lambda^+(n)$.   Recall that the induced module $\nabla_n(\lambda)$ has simple socle $L_n(\lambda)$, so that $\nabla_n(\lambda)$ embeds in   $I_n(\lambda)$. We are interested in the cases in which this embedding is an isomorphism.

\begin{definition}   We call an element $\lambda$ of $\Lambda^+(n)$ an  injective partition for $G(n)$,   or just an  injective partition relative to $n$, if  $\nabla_n(\lambda)$ is injective in the category of polynomial $G(n)$-modules, i.e., if  $\nabla_n(\lambda)=I_n(\lambda)$.

\end{definition}

\q Let $\lambda,\mu\in \Lambda^+(n,r)$.  We may also consider $\lambda$ and $\mu$ as elements of $\Lambda^+(N)$ for $N\geq n$ and we have  $[\nabla_n(\lambda):L_n(\mu)]=[\nabla_N(\lambda):L_N(\mu)]$, by \cite{D3}, 4.2, (6)  (see \cite{EGS}, (6.6e) Theorem for the classical case). We shall  write simply $[\lambda:\mu]$ for 
$[\nabla_n(\lambda):L_n(\mu)]$.

\begin{remark}   Let  $\lambda\in \Lambda^+(n)$ and suppose $\lambda$ has degree $r$.  For  $\mu\in \Lambda^+(n,r)$ we have  $(I_n(\lambda):\nabla_n(\mu))=[\mu:\lambda]$. In particular we have $(I_n(\lambda):\nabla_n(\lambda))=1$ and if $(I_n(\lambda):\nabla_n(\mu))\neq 0$ then $\mu\geq \lambda$. Thus $\lambda$ is injective for $G(n)$ if and only if $[\mu:\lambda]=0$ for all $\mu\in \Lambda^+(n,r)$ with  $\mu>\lambda$.  

\q Suppose  $\lambda$ is  injective for $G(n)$ and $N\geq n$. Let $\mu\in \Lambda^+(N,r)$ and suppose $\mu>\lambda$. Then $\mu$ has at most $n$ parts, i.e., $\mu\in \Lambda^+(n,r)$,  and therefore $[\mu:\lambda|=0$. Thus if $\lambda$ is injective for $G(n)$  then it is injective for $G(N)$  for all $N\geq n$. 
\end{remark}

\q From now on we shall simply say that a partition $\lambda$ is injective if it is injective for some, and hence every,   $G(n)$  with  $n\geq {\rm len}(\lambda)$.

\q Henceforth, for a partition  $\lambda$,  we write simply $\nabla(\lambda)$ for $\nabla_n(\lambda)$, write $L(\lambda)$ for $L_n(\lambda)$  and so on, with  $n$ understood to be sufficiently large, where confusion seems unlikely.

\begin{lemma} If $\lambda$ is injective and $n={\rm len}(\lambda)$ then $\lambda-\omega_n$ is injective.

\end{lemma}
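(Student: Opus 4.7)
The plan is to reduce the statement to the criterion in Remark 2.2 via the tensor shift by the determinant module $D_n$.

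First I would record the basic setup: since $\text{len}(\lambda) = n$ we have $\lambda_n \geq 1$, so $\lambda - \omega_n = (\lambda_1-1,\ldots,\lambda_n-1)$ is a genuine partition in $\Lambda^+(n)$, of degree $r - n$ where $r = \deg(\lambda)$. The goal, by Remark 2.2 applied to $\lambda - \omega_n$, is to show that $[\mu : \lambda - \omega_n] = 0$ for every $\mu \in \Lambda^+(n, r-n)$ with $\mu > \lambda - \omega_n$.

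The key step is the observation that tensoring with the one-dimensional determinant module $D_n = L_n(\omega_n) = \nabla_n(\omega_n)$ is an auto-equivalence of $\mod(G(n))$ which shifts highest weights by $\omega_n$. More precisely, for any $\nu \in X^+(n)$ one has $L_n(\nu) \otimes D_n \cong L_n(\nu + \omega_n)$ and $\nabla_n(\nu) \otimes D_n \cong \nabla_n(\nu + \omega_n)$. Since tensoring with a one-dimensional module is exact and preserves composition series, this gives the identity
\[
[\mu : \lambda - \omega_n] = [\nabla_n(\mu) : L_n(\lambda - \omega_n)] = [\nabla_n(\mu + \omega_n) : L_n(\lambda)] = [\mu + \omega_n : \lambda]
\]
for every $\mu \in \Lambda^+(n, r-n)$.

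Finally, I would observe that the partial order on $X(n)$ is translation-invariant, so $\mu > \lambda - \omega_n$ if and only if $\mu + \omega_n > \lambda$. Moreover $\mu + \omega_n$ lies in $\Lambda^+(n,r)$ (its parts are weakly decreasing and nonnegative). Since $\lambda$ is injective for $G(n)$, Remark 2.2 gives $[\mu + \omega_n : \lambda] = 0$, hence $[\mu : \lambda - \omega_n] = 0$, and another application of Remark 2.2 yields that $\lambda - \omega_n$ is injective. There is no real obstacle here; the only thing to verify carefully is the translation-invariance of the dominance order and the fact that tensoring with $D_n$ preserves polynomiality (which holds because $D_n$ is polynomial of degree $n$).
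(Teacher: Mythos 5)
Your proposal is correct and follows the same route as the paper: the paper's proof is exactly the tensor shift by $D_n$, computing $[\nabla_n(\mu):L_n(\lambda-\omega_n)]=[D_n\otimes\nabla_n(\mu):D_n\otimes L_n(\lambda-\omega_n)]=[\mu+\omega_n:\lambda]=0$ and invoking Remark 2.2. You have simply spelled out the supporting details (translation-invariance of the dominance order, polynomiality) more explicitly.
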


\begin{proof}  We work with $G(n)$-modules.  Suppose that $\mu$ is a partition greater than $\lambda-\omega_n$ in the dominance order.  We  have 
$$[\nabla_n(\mu):L_n(\lambda-\omega_n)]=[D_n\otimes \nabla_n(\mu):D_n\otimes L_n(\lambda-\omega_n)]=[\mu+\omega_n:\lambda]$$
and this is $0$ since $\mu+\omega_n>\lambda$. Hence $\lambda-\omega_n$ is injective by Remark 2.2.

\end{proof}

\begin{lemma}  A partition $\lambda$ is injective if and only if $\lambda$ is a maximal weight of $I(\lambda)$.

\end{lemma}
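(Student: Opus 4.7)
The plan is to deduce the lemma from Remark 2.2 together with the fact, recalled in the preliminaries, that $I(\lambda)$ has a good filtration, so that its formal character is the sum of the characters of the $\nabla(\mu)$'s appearing as sections.

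First I would translate both conditions into statements about which $\nabla(\mu)$ can occur in a good filtration of $I(\lambda)$. By Remark 2.2, injectivity of $\lambda$ is equivalent to $(I(\lambda):\nabla(\mu))=0$ for every $\mu>\lambda$, and Remark 2.2 also guarantees that any $\mu$ with $(I(\lambda):\nabla(\mu))\neq 0$ satisfies $\mu\geq\lambda$, with the case $\mu=\lambda$ occurring with multiplicity exactly one. So $\lambda$ is injective if and only if $\nabla(\lambda)$ is the only section appearing in a good filtration of $I(\lambda)$, which is equivalent to $I(\lambda)=\nabla(\lambda)$.

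For the forward direction, assume $\lambda$ is injective. Then $I(\lambda)=\nabla(\lambda)$, whose unique highest weight is $\lambda$; in particular every weight $\nu$ of $I(\lambda)$ satisfies $\nu\leq\lambda$, so $\lambda$ is a maximal weight of $I(\lambda)$. For the converse, assume $\lambda$ is a maximal weight of $I(\lambda)$, and suppose for contradiction that some $\mu>\lambda$ occurs as a section in a good filtration of $I(\lambda)$. Since $\mu$ is the highest weight of $\nabla(\mu)$, it is in particular a weight of $\nabla(\mu)$, hence a weight of $I(\lambda)$, which contradicts the maximality of $\lambda$. Therefore only $\nabla(\lambda)$ appears, with multiplicity one, giving $I(\lambda)=\nabla(\lambda)$ and so $\lambda$ is injective.

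There is essentially no obstacle: once one has the reciprocity formula and the good filtration of $I(\lambda)$ at one's disposal, both implications are immediate from tracking highest weights of the sections. The only subtlety worth flagging is the implicit use of the dominance order (so "maximal" means maximal in the sense of the partial order $\leq$ on $X(n)$), which is already the order in which Remark 2.2 was phrased.
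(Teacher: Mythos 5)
Your proof is correct and follows essentially the same route as the paper's: the forward direction is immediate since $I(\lambda)=\nabla(\lambda)$ has unique maximal weight $\lambda$, and the converse uses Remark 2.2 (i.e.\ reciprocity) to reduce to ruling out sections $\nabla(\mu)$ with $\mu>\lambda$, which is exactly what maximality of $\lambda$ among the weights of $I(\lambda)$ does. The only cosmetic difference is that you phrase the contradiction via the character of the good filtration, whereas the paper applies maximality directly to the $\mu$ obtained from reciprocity; these are the same argument.
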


\begin{proof}   The module $\nabla(\lambda)$ has maximal weight $\lambda$ so if $\lambda$ is injective it is a maximal weight of $I(\lambda)$.

\q Suppose conversely that $\lambda$ is a maximal weight of $I(\lambda)$. Let $\mu\in \Lambda^+(n,r)$ with $(I(\lambda):\nabla(\mu))\neq 0$ and hence, by reciprocity, $[\mu:\lambda]\neq 0$. Then $\mu\geq\lambda$ and by maximality $\mu=\lambda$ and so $\lambda$ is injective, by Remark 2.2. 
\end{proof}

\q Given a partition $\lambda$ we may write $\lambda$ uniquely in the form $\lambda=\lambda^0+l\barlambda$, where $\lambda^0,\barlambda$ are partitions and $\lambda^0$ is $l$-restricted.

\q \q It will be important for us to make a comparison with the classical case $q=1$. In this case we will write $\dotG(n)$ for  $G(n)$ and write $x_{ij}$ for the coordinate element $c_{ij}$, $1\leq i,j\leq n$. We also write $\dotL_n(\lambda)$ for the $\dotG(n)$-module $L_n(\lambda)$, $\lambda\in X^+(n)$.

\q Now we have a morphism of Hopf algebras $\theta: K[\dotG(n)]\to K[G(n)]$ given by $\theta(x_{ij})=c_{ij}^l$, for $1\leq i,j\leq n$.  We write $F:G(n)\to \dotG(n)$ for the morphism of quantum groups such that $F^\sharp=\theta$.  Given a $\dotG(n)$-module $V$ we write $V^F$ for the corresponding $G(n)$-module. Thus, $V^F$ as a vector space is $V$ and if the $\dotG(n)$-module $V$ has structure map $\tau:V\to V\otimes K[\dotG(n)]$ then $V^F$ has structure map $(\id_V \otimes F)\circ \tau: V^F \to  V^F\otimes K[G(n)]$, where $\id_V:V\to V$ is the identity map on the vector space  $V$.  

% For an element $\phi=\sum_{\xi\in X(n)} a_\xi e^\xi$ of $\zed X(n)$ we write $\phi^F$ for the element $\sum_{\xi\in X(n)}a_\xi e^{l\xi}$.  Then, for a finite dimensional $\dotG(n)$-module $V$ we have $\ch V^F= (\ch V)^F$.  

\q  We have the following relationship between the irreducible modules for $G(n)$ and ${\dotG(n)}$, see \cite{D3}, Section 3.2, (5).

\begin{theorem} 
(Steinberg's Tensor Product Theorem)   For  $\lambda^0\in X_1(n)$ and $\barlambda\in X^+(n)$ we have 
$$L_n(\lambda^0+l\barlambda)\cong L_n(\lambda^0)\otimes \dotL_n(\barlambda)^F.$$

\end{theorem}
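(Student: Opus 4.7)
The plan is to split the argument into two parts: first identify $\lambda^0 + l\barlambda$ as the unique maximal weight of $M := L_n(\lambda^0) \otimes \dotL_n(\barlambda)^F$ with multiplicity one, and then show that $M$ is irreducible as a $G(n)$-module. Once both are in hand, $M \cong L_n(\lambda^0 + l\barlambda)$ follows from the classification of irreducibles by highest weight.

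For the weight calculation I would first observe that, since $\theta(x_{ij}) = c_{ij}^l$, the restriction of $F$ to the maximal torus sends the group-like ${\bar x}^\mu$ to ${\bar c}^{l\mu}$. Thus the weight spaces of $\dotL_n(\barlambda)^F$ are obtained from those of $\dotL_n(\barlambda)$ by multiplying each weight by $l$. The character of $M$ is then the product of characters, and its weights are of the form $\nu + l\mu$ with $\nu$ a weight of $L_n(\lambda^0)$ and $\mu$ a weight of $\dotL_n(\barlambda)$. Since $\nu \leq \lambda^0$ and $\mu \leq \barlambda$ in the dominance order, the unique maximal weight is $\lambda^0 + l\barlambda$, occurring with multiplicity one because each tensor factor attains its highest weight with multiplicity one.

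For irreducibility, the standard route is via the Frobenius kernel $G_1(n) = \ker F$, the quantum subgroup whose defining ideal is generated by $\theta(K[\dotG(n)]^+)\cdot K[G(n)]$. By construction $F$ factors through $G(n)/G_1(n) \cong \dotG(n)$, so $\dotL_n(\barlambda)^F$ is trivial on $G_1(n)$; whereas $L_n(\lambda^0)$ remains irreducible on restriction to $G_1(n)$, because $\lambda^0 \in X_1(n)$ and the simple $G_1(n)$-modules are precisely those $L_n(\mu)$ with $\mu \in X_1(n)$. Consequently $M$ is $L_n(\lambda^0)$-isotypic as a $G_1(n)$-module, with multiplicity space $\Hom_{G_1(n)}(L_n(\lambda^0), M) \cong \dotL_n(\barlambda)^F$ on which the residual $\dotG(n) = G(n)/G_1(n)$ action agrees with the given action on $\dotL_n(\barlambda)$. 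Since the latter is irreducible over $\dotG(n)$, any nonzero $G(n)$-submodule of $M$ has full $L_n(\lambda^0)$-isotypic $G_1(n)$-component and hence equals $M$.

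The main obstacle is the pair of facts about the infinitesimal quantum group $G_1(n)$: that $L_n(\lambda^0)$ stays simple upon restriction, and that the simple $G_1(n)$-modules are parametrised by $X_1(n)$. Both are standard in the classical case via the representation theory of restricted enveloping algebras, and in the quantum case they rest on the theory of the small quantum group together with Lusztig's quantum Frobenius morphism; one would import these results from \cite{D3} rather than redevelop them, since they are precisely the input that makes Steinberg's argument go through in either setting.
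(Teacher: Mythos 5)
The paper offers no proof of this theorem: it is imported verbatim from \cite{D3}, Section 3.2, (5). Your outline is the standard Steinberg-type argument --- identify $\lambda^0+l\barlambda$ as the unique maximal weight of the tensor product with multiplicity one, then prove irreducibility by Clifford theory for the Frobenius kernel $G_1$ --- and this is essentially the argument behind the cited result, so there is no divergence of method to report; the weight computation and the isotypic-component argument are both correct as stated. Be aware, though, that your proof is really a reduction to the facts you defer at the end (the simple $G_1(n)$-modules are the $L_n(\mu)$ with $\mu\in X_1(n)$, these remain simple on restriction from $G(n)$, and the multiplicity space $\Hom_{G_1}(L_n(\lambda^0),M)$ carries the quotient $\dotG(n)$-action matching $\dotL_n(\barlambda)$); since all of these are themselves taken from \cite{D3}, your argument has the same logical status as the paper's bare citation rather than being an independent proof.
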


\begin{lemma} If $\lambda$ is an injective partition for $G(n)$ then $\lambda^0$ is injective for $G(n)$ and $\barlambda$ is injective for $\dotG(n)$.

\end{lemma}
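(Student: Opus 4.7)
The plan is to argue the contrapositive for each conclusion via Remark~2.2.  Suppose for contradiction that either there is $\mu^0\in\Lambda^+(n,|\lambda^0|)$ with $\mu^0>\lambda^0$ and $[\mu^0:\lambda^0]\neq 0$, or there is $\barmu\in\Lambda^+(n,|\barlambda|)$ with $\barmu>\barlambda$ and $[\dotnabla(\barmu):\dotL(\barlambda)]\neq 0$.  In the first case set $\mu=\mu^0+l\barlambda$ and $M=\nabla(\mu^0)\otimes\dotnabla(\barlambda)^F$; in the second set $\mu=\lambda^0+l\barmu$ and $M=\nabla(\lambda^0)\otimes\dotnabla(\barmu)^F$.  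In either case $\mu>\lambda$ in the dominance order, so the injectivity of $\lambda$ together with Remark~2.2 forces $[\mu:\lambda]=0$, and the aim is to derive a contradiction.

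First I would compute $[M:L(\lambda)]$ from its composition factors.  Using Steinberg's Tensor Product Theorem (Theorem~2.5) to factor each composition factor $L(\sigma)$ of $\nabla(\mu^0)$ (resp.\ $\nabla(\lambda^0)$) as $L(\sigma^0)\otimes\dotL(\barsigma)^F$, and similarly $L(\lambda)=L(\lambda^0)\otimes\dotL(\barlambda)^F$, the only contributions to $[M:L(\lambda)]$ come from factors with $\sigma^0=\lambda^0$; but the equality of degrees $|\sigma|=|\mu^0|=|\lambda^0|$ forces $\barsigma=0$, so the tensor-product composition factor analysis collapses to
$$[M:L(\lambda)] \,=\, [\nabla(\mu^0):L(\lambda^0)]\cdot[\dotnabla(\barlambda):\dotL(\barlambda)] \quad\text{(resp.\ }1\cdot[\dotnabla(\barmu):\dotL(\barlambda)]\text{)},$$
which is strictly positive by our contradiction hypothesis.

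Next, by the Donkin--Wang--Mathieu theorem $M$ admits a good filtration, and combining Remark~2.2 with the assumed injectivity of $\lambda$ yields $[\sigma:\lambda]=0$ whenever $\sigma\neq\lambda$; the filtration sum therefore collapses to
$$[M:L(\lambda)] \,=\, \sum_\sigma (M:\nabla(\sigma))[\sigma:\lambda] \,=\, (M:\nabla(\lambda)),$$
so $(M:\nabla(\lambda))>0$.

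The contradiction will come from computing $(M:\nabla(\lambda))$ as a Schur-function coefficient: it equals the coefficient of $s_{\lambda^0+l\barlambda}$ in $s_{\mu^0}(x)\cdot s_{\barlambda}(x_1^l,\ldots,x_n^l)$ (resp.\ in $s_{\lambda^0}(x)\cdot s_{\barmu}(x_1^l,\ldots,x_n^l)$), this being the character of $M$.  The main obstacle of the proof is showing this coefficient vanishes whenever $\mu^0\neq\lambda^0$ (resp.\ $\barmu\neq\barlambda$).  I expect a Pieri/Littlewood--Richardson-style argument in which the $l$-restrictedness constraints $\lambda^0_i-\lambda^0_{i+1}<l$ and $\lambda^0_n<l$ force any skew filling of $[\lambda^0+l\barlambda]/[\mu^0]$ with content $l\barlambda$ to recover $\mu^0$ row-by-row, so that the coefficient can be non-zero only when $\mu^0=\lambda^0$; this would close the contradiction and complete the proof.
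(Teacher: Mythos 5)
Your reduction to a character computation (via Remark 2.2 and Steinberg's tensor product theorem) is reasonable in outline, and the computation of $[M:L(\lambda)]$ by degree considerations is correct, but the argument has two problems. The first is fixable: $M=\nabla(\mu^0)\otimes\dotnabla(\barlambda)^F$ does \emph{not} in general admit a good filtration, so the Donkin--Mathieu--Wang theorem does not apply. The Frobenius twist $\dotnabla(\barlambda)^F$ itself typically has no good filtration (already $\dotnabla(\ep_1)^F=L(l\ep_1)$ is a proper submodule of $\nabla(l\ep_1)$), and the tensor product theorem requires both factors to have one. You can repair this by working with formal characters: write $\ch M=\sum_\sigma a_\sigma\,\ch\nabla(\sigma)$ with $a_\sigma\in\zed$ possibly negative; the identity $[M:L(\lambda)]=\sum_\sigma a_\sigma[\sigma:\lambda]$ and the collapse to $a_\lambda$ still go through, since injectivity of $\lambda$ forces $[\sigma:\lambda]=\delta_{\sigma\lambda}$.

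The genuine gap is the final step, which you explicitly leave as an expectation. You need the coefficient of $s_{\lambda^0+l\barlambda}$ in $s_{\mu^0}(x)\,s_{\barlambda}(x_1^l,\ldots,x_n^l)$ to vanish for \emph{every} partition $\mu^0>\lambda^0$ of the same degree (and the analogous statement with $\barmu>\barlambda$). This is not a Pieri or Littlewood--Richardson computation: $s_{\barlambda}(x_1^l,\ldots,x_n^l)$ is the plethysm $s_{\barlambda}[p_l]$, whose Schur expansion carries signs governed by $l$-cores and $l$-quotients, so the coefficient in question is an alternating sum of Littlewood--Richardson coefficients and its vanishing requires a cancellation argument rather than a positivity argument. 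Moreover, the $l$-restrictedness you propose to exploit constrains $\lambda^0$, not $\mu^0$, and $\mu^0$ is an essentially arbitrary partition dominating $\lambda^0$. Since this vanishing statement is, in effect, the content of the lemma, the proof as written is incomplete at its crux. The paper's own argument avoids characters altogether: it reads off the injectivity of $\barlambda$ from the description of the $G_1$-socles of $\nabla(\lambda)$ and $I(\lambda)$ as $L(\lambda^0)\otimes\dotnabla(\barlambda)^F$ and $L(\lambda^0)\otimes\dotI(\barlambda)^F$ respectively (\cite{DG1}, Lemma 3.2(i)), and obtains the injectivity of $\lambda^0$ from the embedding $I(\lambda^0)\otimes\dotI(\barlambda)^F\hookrightarrow I(\lambda)$ combined with the maximal-weight criterion of Lemma 2.4. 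You may want to look for structural input of this kind rather than trying to establish the combinatorial identity directly.
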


\begin{proof}    We write $G_1$ for the first infinitesimal subgroup of $G(n)$.  The  $G_1$-socle of $\nabla(\lambda)$ is $L(\lambda^0)\otimes \dotnabla(\barlambda)^F$, and the $G_1$-socle of  $I(\lambda)$ is $L(\lambda^0)\otimes \dotI(\barlambda)^F$ by \cite{DG1},  Lemma 3.2 (i)   (and the remarks on the quantised situation in \cite{DG1}, Section 5). Since $\dotnabla(\barlambda)^F$ embeds in $\dotI(\barlambda)^F$ we must have $\dotnabla(\barlambda)=\dotI(\barlambda)$ and $\barlambda$ is injective for $\dotG(n)$.

\q Let $\mu$ be a maximal weight of $I(\lambda^0)$.  Now by \cite{DG1}, Lemma 3.1, $I(\lambda^0)\otimes \dotI(\barlambda)^F$ has $G(n)$-socle $L(\lambda)$ and so $I(\lambda^0)\otimes \dotI(\barlambda)^F$ embeds in $I(\lambda)$.  Thus  $\mu+l\barlambda$ is a weight of $I(\lambda)$ and so $I(\lambda)$ has a maximal weight $\tau$, say, such that $\tau\geq \mu+l\barlambda$.   But $I(\lambda)=\nabla(\lambda)$ has unique maximal weight $\lambda$ so that $\lambda\geq \tau\geq \mu+l\barlambda\geq \lambda^0+l\barlambda=\lambda$ and so   $\mu=\lambda^0$.  Hence $\lambda^0$ is a maximal weight of $I(\lambda^0)$ and so,  by Lemma 2.4,  $\lambda^0$ is injective.

\end{proof}

%\q We shall use the notion of $l$-cores and $l$-hooks of a partition as in, for example, \cite{James}. 

\begin{lemma} Let $\lambda$ be an injective partition and write $\lambda=\lambda^0+l\barlambda$, for partitions $\lambda^0,\barlambda$ with $\lambda^0$ being $l$-restricted. Then $\lambda^0$ is an $l$-core. 
\end{lemma}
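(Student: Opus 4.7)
The plan is to argue by contradiction via Remark~2.2. By Lemma~2.6, $\lambda^0$ itself is injective for $G(n)$, so it is enough to prove the sharper statement: any $l$-restricted injective partition is an $l$-core. Suppose then that $\lambda^0$ is $l$-restricted and injective, yet its $l$-core $\kappa$ is a proper subpartition of $\lambda^0$. Writing $r=\deg(\lambda^0)$, the block $B$ of $S(n,r)$ containing $L(\lambda^0)$ then has positive $l$-weight, so it contains partitions other than $\lambda^0$.

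The goal is to exhibit $\mu\in \Lambda^+(n,r)$ with $\mu>\lambda^0$ in dominance and $[\mu:\lambda^0]\neq 0$; by Remark~2.2 this contradicts injectivity of $\lambda^0$. Any such $\mu$ must share the $l$-core $\kappa$ with $\lambda^0$, by the block description recalled before Remark~2.2. The natural candidate I would build comes from the abacus display of $\lambda^0$: since $\lambda^0$ is not an $l$-core, some bead on some runner has a vacancy above it on a different runner, and exchanging that bead with the vacancy (equivalently, a degree-preserving reshuffle of an $l$-hook from a lower row of $[\lambda^0]$ to a strictly higher row) produces a partition $\mu$ in the same block with $\mu>\lambda^0$. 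The $l$-restrictedness of $\lambda^0$ controls the relative positions of beads on neighbouring runners tightly enough that such a move is always available.

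The main obstacle is showing $[\mu:\lambda^0]\neq 0$ for the $\mu$ so constructed; this is a genuine decomposition-number assertion and not formal. I would approach it via Jantzen's sum formula applied to $\Delta(\mu)$ (equivalently to $\nabla(\mu)$): the single-runner reshuffle that produced $\mu$ from $\lambda^0$ matches a single affine simple reflection carrying $\mu$ to $\lambda^0$, contributing a term to the Jantzen sum that cannot be cancelled by other reflections, for dominance reasons forced by the $l$-restrictedness of $\lambda^0$. Once this non-vanishing is secured, Remark~2.2 contradicts the injectivity of $\lambda^0$, so $\lambda^0$ must be an $l$-core.
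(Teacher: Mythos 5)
Your opening reduction is the same as the paper's: by Lemma~2.6 it suffices to show that an $l$-restricted injective partition is an $l$-core. After that, however, your argument rests entirely on a step you have not actually carried out. To contradict Remark~2.2 you must produce a specific $\mu>\lambda^0$ with $[\mu:\lambda^0]\neq 0$, and you acknowledge that this is ``a genuine decomposition-number assertion and not formal'' --- but then you only gesture at it: the claim that the abacus move corresponds to a single affine reflection whose contribution to the Jantzen sum ``cannot be cancelled by other reflections, for dominance reasons forced by the $l$-restrictedness'' is precisely the hard content of the lemma, and it is left unverified. Two separate things would need to be checked: that the chosen bead move yields $\mu>\lambda^0$ in dominance (your abacus description is loose here --- ``not an $l$-core'' means a bead has a vacancy $l$ positions above it on the \emph{same} runner, and a degree-preserving move requires a compensating shift on another runner, so dominance is not automatic), and that the resulting term in the Jantzen sum survives cancellation \emph{and} forces the composition factor $L(\lambda^0)$ itself rather than some $L(\nu)$ with $\nu<\lambda^0$. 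Neither is done. Non-vanishing of individual decomposition numbers is exactly the kind of statement one cannot wave through.

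The paper avoids this entirely. Having reduced to $\lambda=\lambda^0$ restricted, it invokes the fact that $I(\lambda)$ is then isomorphic to its contravariant dual; since $I(\lambda)=\nabla(\lambda)$ has simple socle $L(\lambda)$ occurring with multiplicity one, self-duality forces $\nabla(\lambda)=L(\lambda)$. Combined with injectivity this gives $[\mu:\lambda]=\delta_{\mu\lambda}$ and $[\lambda:\tau]=\delta_{\lambda\tau}$, so $L(\lambda)$ is alone in its block, and the block description then says $\lambda$ is an $l$-core. In other words, the paper gets the block-theoretic conclusion from structural self-duality with no decomposition-number computation at all. Your route is in the spirit of the Jantzen--Mathas classification of irreducible Weyl modules (which the paper's Remark~2.22 notes can be used as an alternative), but to make it a proof you would have to import or reprove that nontrivial input rather than sketch it.
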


\begin{proof}   By  the previous lemma we may assume $\lambda=\lambda^0$, i.e., that $\lambda$ is restricted.  Thus  $I(\lambda)$ is isomorphic to its contravariant dual, see e.g., \cite{D3}, 4.3,(2),(ii) , 4.3, (4) and (4.3), (ix).  Hence $I(\lambda)$ has simple head $L(\lambda)$. But $I(\lambda)=\nabla(\lambda)$ and $[\nabla(\lambda):L(\lambda)]=1$ so that in fact    $I(\lambda)=\nabla(\lambda)=L(\lambda)$.  Thus we get $[\mu:\lambda]=\delta_{\lambda,\mu}$ (the Kronecker delta) and $[\lambda:\tau]=\delta_{\lambda,\tau}$, for all partitions  $\mu,\tau$ with $|\mu|=|\tau|=|\lambda|$.  Hence $L(\lambda)$ is the only simple in its block (up to isomorphism), i.e., $\lambda$ is an  $l$-core.
\end{proof}

\q We introduce some  additional notation.  We set $\delta_0=0$ and $\delta_n=(n,n-1,\ldots,2,1)$, for $n\geq 1$.  We set $\sigma_0=0$ and 
$$\sigma_n=(l-1)\delta_n=(n(l-1),(n-1)(l-1),\ldots,2(l-1),(l-1))$$
for $n\geq 1$, so that $\sigma_n=(l-1)\delta_n$ for $n\geq 0$.  

\q   We call the partitions of the form   $\sigma_n$, for some $n\geq 0$, the   Steinberg partitions.  The justification for this is that in the classical case, with $K$ an algebraically closed field of characteristic $p>0$ the   restriction of  the ${\rm GL}_{n+1}(K)$-module $L(\sigma_n)$  to the special linear group ${\rm SL}_{n+1}(K)$ is the usual Steinberg module. 

\q Note that, since $\delta_n=\omega_n+\delta_{n-1}$ we have 
$$\sigma_n=\sigma_{n-1}+(l-1)\omega_n$$
for $n\geq 1$.

\begin{remark} {\rm{Suppose $n\geq 1$, $0\leq  a< l$ and  let $\mu$ be an injective partition of length at most $n$. We note that $\lambda=\sigma_{n-1}+a\omega_n+l\mu$ is injective. 
We have   that $\nabla(\sigma_{n-1}+a\omega_n)=\nabla(\sigma_{n-1})\otimes D_n^{\otimes a}$  is injective as a module for the first infinitesimal subgroup $G_1$ of $G(n)$ by \cite{D3}, Section 3.2, (12)  (and for example  
 \cite{RAG}, II, 10.2 Proposition in the classical case).   Hence by \cite{DG1}, Lemma 3.2(ii), and the remarks on the quantised situation in \cite{DG1}, Section 5,   we have $I(\sigma_{n-1}
+a\omega_n)=\nabla(\sigma_{n-1}+a\omega_n)$ and   $I(\lambda)=\nabla(\sigma_{n-1}+a\omega_n)\otimes \dotI(\mu)^F=\nabla(\sigma_{n-1}+a\omega_n)\otimes\dotnabla(\mu)^F$.  However, by  \cite{D3}, Section 3.2, (13) (and 
\cite{RAG}, II, 3.19 Proposition in the classical case) we have $\nabla(\lambda)=\nabla(\sigma_{n-1}+a\omega_n)\otimes\dotnabla(\mu)^F$ so that $\lambda=\sigma_{n-1}+a\omega_n+l\mu$ is injective.}} 

\end{remark}

\begin{remark} {\rm{Suppose   $\lambda=(\lambda_1,\ldots,\lambda_n)\in \Lambda^+(n)$  is an $l$-core and  $\lambda_n=l-1$. Then we have $\lambda=\sigma_n$. No doubt this is well known. We see it as follows. We may assume $n\geq 2$. Certainly $\lambda_{n-1}-\lambda_n<l$, for otherwise row $n-1$ of the diagram of $\lambda$ contains a skew $l$-hook. If $\lambda_{n-1}<2l-2$ then  there is a skew $l$-hook beginning at $(n-1,\lambda_{n-1})$ and ending at $(n,\lambda_{n-1}+2-l)$. Thus we have  $\lambda_{n-1}=2l-2$. Now $\mu=\lambda-(l-1)\omega_n$ is a $l$-core of length $n-1$ with last non-zero entry $l-1$.  Hence we can assume inductively that $\mu=\sigma_{n-1}$ and hence
$$\lambda=\sigma_{n-1}+(l-1)\omega_n=\sigma_n.$$}}

\end{remark}

\begin{lemma} If the partition  $\lambda$ is injective and ${\rm len}(\lambda^0)<{\rm len}(\barlambda)$ then $\lambda^0=\sigma_{n-1}$, where $n={\rm len}(\lambda)$.

\end{lemma}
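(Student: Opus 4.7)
The plan is to apply Lemma 2.3 to $\lambda$, identify the $l$-restricted summand of $\lambda-\omega_n$ explicitly, and then feed the result through Lemma 2.6, Lemma 2.7 and Remark 2.9 to pin down $\lambda^0$. Throughout, set $n={\rm len}(\lambda)$ and $k={\rm len}(\lambda^0)$. The hypothesis $k<{\rm len}(\barlambda)\leq n$ combined with $\lambda_n>0$ immediately forces ${\rm len}(\barlambda)=n$, so $\barlambda_n\geq 1$ while $\lambda^0_n=0$.

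First I would invoke Lemma 2.3 to deduce that $\lambda-\omega_n$ is injective. The crux of the argument is then the identification
$$(\lambda-\omega_n)^0=\lambda^0+(l-1)\omega_n, \qquad \overline{\lambda-\omega_n}=\barlambda-\omega_n.$$
Verification is straightforward: first, $(\lambda^0+(l-1)\omega_n)+l(\barlambda-\omega_n)=\lambda^0+l\barlambda-\omega_n=\lambda-\omega_n$; second, $\lambda^0+(l-1)\omega_n$ is $l$-restricted because its consecutive differences agree with those of $\lambda^0$ (even across the index $k$, where the difference is $\lambda^0_k<l$) and its $n$-th entry is $\lambda^0_n+(l-1)=l-1<l$; and third, $\barlambda-\omega_n$ is a legitimate partition precisely because $\barlambda_n\geq 1$. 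Uniqueness of the canonical $l$-restricted decomposition then promotes this observation to the stated identification.

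With this in hand, Lemma 2.6 applied to $\lambda-\omega_n$ tells us that $\lambda^0+(l-1)\omega_n$ is injective. Since this partition is itself $l$-restricted, Lemma 2.7 applied to it shows that it is an $l$-core. Its length is $n$ and its $n$-th part is $l-1$, so Remark 2.9 forces
$$\lambda^0+(l-1)\omega_n=\sigma_n.$$
Combined with the identity $\sigma_n=\sigma_{n-1}+(l-1)\omega_n$ already recorded in the text, this gives $\lambda^0=\sigma_{n-1}$, as required.

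I do not expect a serious obstacle; the only slightly delicate step is verifying the identification of $(\lambda-\omega_n)^0$. One must avoid the naive guess $(\lambda-\omega_n)^0=\lambda^0-\omega_n$, which fails because the trailing zero entries of $\lambda^0$ would become negative. The correct recipe propagates a ``carry'' of $l$ through every row, ultimately installing the value $l-1$ at the $n$-th position, and this is precisely the mechanism that manufactures the Steinberg partition $\sigma_{n-1}$ in the conclusion.
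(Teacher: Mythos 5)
Your proof is correct and is essentially the paper's own argument: both pass to $\mu=\lambda-\omega_n$, use Lemma 2.3, Lemma 2.6, Lemma 2.7 and Remark 2.9 to identify $\mu^0$ with $\sigma_n$, and then peel off $(l-1)\omega_n$ via $\sigma_n=\sigma_{n-1}+(l-1)\omega_n$. The only (harmless) difference is that you compute $\mu^0=\lambda^0+(l-1)\omega_n$ explicitly before invoking Remark 2.9, whereas the paper only observes $\mu^0_n=l-1$ and recovers $\lambda^0$ afterwards from $\lambda=\mu+\omega_n$.
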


\begin{proof} We consider $\mu=\lambda-\omega_n$.  Note that $\mu$ has length $n$ and $\mu_n$ is congruent to $-1$ modulo $l$. Hence, writing $\mu=\mu^0+l{\bar \mu}$, we have $\mu_n^0=l-1$.  Moreover, $\mu$ is injective, by Lemma 2.3, and so $\mu^0$ is injective by Lemma 2.6. Hence $\mu^0$ is a core, by Lemma 2.7 and $\mu^0=\sigma_n$, by Remark 2.9. Now we have 
$$\lambda=\mu+\omega_n=\sigma_n+\omega_n+l{\bar\mu}=\sigma_{n-1}+l({\barmu}+\omega_n)$$
and so $\lambda^0=\sigma_{n-1}$.

\end{proof}

\begin{lemma} Let  $\lambda$ be a partition of length $n$. If $\lambda$ is   injective then ${\rm len}(\barlambda)\leq {\rm len}(\lambda^0)+1$ and in case equality holds we have  $\lambda^0=\sigma_{n-1}$.

\end{lemma}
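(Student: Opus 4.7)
The plan is to reduce the lemma to a simple length comparison that forces either strict inequality or the hypothesis of the preceding Lemma 2.10. Set $a=\mathrm{len}(\lambda^0)$ and $b=\mathrm{len}(\barlambda)$. Since $\lambda=\lambda^0+l\barlambda$ has nonnegative entries and length exactly $n$, the condition $\lambda_{n+1}=0$ forces $\lambda^0_{n+1}=\barlambda_{n+1}=0$, so $a,b\leq n$, while $\lambda_n\neq 0$ forces $\max(a,b)=n$.

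Now I would split into two cases. If $b\leq a$, then $b\leq a<a+1$ and the inequality $\mathrm{len}(\barlambda)\leq \mathrm{len}(\lambda^0)+1$ holds strictly, so the equality clause is vacuous. If instead $b>a$, then since $\max(a,b)=n$ we must have $b=n$ and $a<n$, i.e., $\mathrm{len}(\lambda^0)<\mathrm{len}(\barlambda)$. This is precisely the hypothesis of Lemma 2.10, which then yields $\lambda^0=\sigma_{n-1}$; in particular $a=n-1$, so $b=n=a+1$, confirming both the inequality and the identification of $\lambda^0$ in the equality case.

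There is essentially no obstacle here, since the heavy lifting has already been done in Lemma 2.10; the argument is a bookkeeping exercise on the lengths of $\lambda^0$ and $\barlambda$ combined with the elementary observation that one of them must equal $n$. The only point requiring a moment's care is noting that the two inequalities $a,b\leq n$ together with $\max(a,b)=n$ exhaust the possibilities, thereby forcing the two cases above and no others.
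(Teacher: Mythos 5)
Your proposal is correct and follows essentially the same route as the paper: both arguments reduce the statement to Lemma 2.10 via the observation that $\max(\mathrm{len}(\lambda^0),\mathrm{len}(\barlambda))=n$, so that $\mathrm{len}(\barlambda)>\mathrm{len}(\lambda^0)$ forces $\mathrm{len}(\barlambda)=n$ and $\mathrm{len}(\lambda^0)<n$, whence $\lambda^0=\sigma_{n-1}$ and the equality $\mathrm{len}(\barlambda)=\mathrm{len}(\lambda^0)+1$. Your case split is just a slightly more explicit bookkeeping of the same argument.
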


\begin{proof} If ${\rm len}(\barlambda)\geq {\rm len}(\lambda^0)+1$ then ${\rm len}(\barlambda)>{\rm len}(\lambda^0)$ so that $n={\rm len}(\barlambda)$ and ${\rm len}(\lambda^0)<n$. Hence $\lambda^0=\sigma_{n-1}$ by Lemma 2.10 and ${\rm len}(\barlambda)={\rm len}(\lambda^0)+1$.

\end{proof}

\begin{lemma} Suppose that the partition $\lambda$ satisfies ${\rm len}( \lambda^0)={\rm len}(\lambda)$ and $\lambda^0$ is an $l$-core. If $\lambda-\omega_n$ is injective, where $n$ is the length of $\lambda$,  then so is $\lambda$. 

\end{lemma}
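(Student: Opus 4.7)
The plan is to apply Remark 2.2: show $[\mu:\lambda]=0$ for every partition $\mu>\lambda$ with $|\mu|=|\lambda|=r$. Such a $\mu$ has ${\rm len}(\mu)\leq n$, since $\sum_{i=1}^n\mu_i\geq\sum_{i=1}^n\lambda_i=r=|\mu|$ forces $\mu_{n+1}=0$. I split on the value of ${\rm len}(\mu)$.

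If ${\rm len}(\mu)=n$ then both $\mu-\omega_n$ and $\lambda-\omega_n$ are partitions and $\mu-\omega_n>\lambda-\omega_n$ in the dominance order. Running the tensor-with-$D_n^{-1}$ computation from the proof of Lemma 2.3 in reverse gives $[\mu:\lambda]=[\mu-\omega_n:\lambda-\omega_n]$, and the right-hand side vanishes by the injectivity hypothesis on $\lambda-\omega_n$ together with Remark 2.2. This case uses only the hypothesis on $\lambda-\omega_n$, not the core condition on $\lambda^0$.

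If ${\rm len}(\mu)<n$, the strategy is to show that $\mu$ and $\lambda$ have distinct $l$-cores, so that they lie in different blocks of $S(n,r)$ and $[\mu:\lambda]=0$ by the block criterion of Section 1.2. The essential claim is that $\core(\lambda)=\lambda^0$: granting this, ${\rm len}(\core(\lambda))={\rm len}(\lambda^0)=n$, whereas ${\rm len}(\core(\mu))\leq{\rm len}(\mu)<n$ because removing an $l$-hook cannot increase the length of a partition, so the cores are different.

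To establish $\core(\lambda)=\lambda^0$ — which is where the assumption that $\lambda^0$ is itself an $l$-core becomes essential — I would strip horizontal $l$-hooks from $\lambda=\lambda^0+l\barlambda$ one at a time: as long as $\barlambda\neq 0$ one can choose an index $i$ with $\barlambda_i>\barlambda_{i+1}$ (setting $\barlambda_{n+1}=0$ if necessary), and the inequality
$$\lambda_i-\lambda_{i+1}-l=(\lambda^0_i-\lambda^0_{i+1})+l(\barlambda_i-\barlambda_{i+1}-1)\geq 0$$
permits the removal of a horizontal $l$-hook from row $i$, which replaces $\barlambda$ by $\barlambda-\ep_i$ and leaves $\lambda^0$ intact; iterating drives $\barlambda$ to $0$ and terminates at $\lambda^0$, which by hypothesis admits no further $l$-hook removals. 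This $l$-core calculation is the principal technical step, and everything else falls out of the preceding lemmas and the block criterion.
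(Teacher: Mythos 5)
Your proposal is correct and follows essentially the same route as the paper: the paper's proof likewise observes that any $\mu>\lambda$ with $[\mu:\lambda]\neq 0$ must share the $l$-core $\lambda^0$ of $\lambda$ and hence have length $n$, and then kills $[\mu:\lambda]=[\mu-\omega_n:\lambda-\omega_n]$ via the determinant twist and the injectivity of $\lambda-\omega_n$. The only difference is presentational: you make the case split on ${\rm len}(\mu)$ explicit and spell out the verification that $\core(\lambda)=\lambda^0$ by stripping horizontal $l$-hooks, a fact the paper uses without comment.
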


\begin{proof}  Suppose $\mu$ is a partition such that $\mu>\lambda$ and $[\mu:\lambda]\neq 0$.  Then $\mu$ also has core $\lambda^0$ and so $\mu$ has length $n$. Thus we may write $\mu=\tau+\omega_n$, for some partition $\tau$.  But then
$$[\mu:\lambda]=[\tau+\omega_n:\lambda]=[\tau:\lambda-\omega_n]=0.$$
Thus no such partition $\mu$ exists and $\lambda$ is injective.

\end{proof}

\begin{definition} We define the Steinberg index  $\stind_l(\lambda)$ relative to $l$  of a partition $\lambda$  to be $0$ if $\lambda_1-\lambda_2\neq l-1$ and otherwise to be $m>0$ if $\lambda_i-\lambda_{i+1}=l-1$ for $1\leq i\leq m$ and $\lambda_{m+1}-\lambda_{m+2}\neq l-1$. (Thus for example $\stind(\sigma_n)=n$, for $n\geq 0$).

\end{definition}

\begin{proposition} Let $\lambda$ be a partition written  $\lambda=\lambda^0+l\barlambda$ in standard form.  Then $\lambda$ is injective if and only if $\lambda^0$ is an $l$-core, $\barlambda$ is injective and ${\rm len}(\barlambda)\leq \stind_l(\lambda^0)+1$.

\end{proposition}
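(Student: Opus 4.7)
The plan is to prove both directions by induction on $\deg(\lambda)$.

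For the forward direction, Lemma~2.7 and Lemma~2.6 immediately yield that $\lambda^0$ is an $l$-core and $\barlambda$ is injective, so the substance lies in the bound ${\rm len}(\barlambda)\le\stind_l(\lambda^0)+1$. Lemma~2.11 already gives the extremal case: when ${\rm len}(\barlambda)={\rm len}(\lambda^0)+1$, we have $\lambda^0=\sigma_{n-1}$ with $n={\rm len}(\lambda)$, so $\stind_l(\sigma_{n-1})+1=n$ saturates the inequality. Otherwise ${\rm len}(\barlambda)\le{\rm len}(\lambda^0)=n$, and I would apply Lemma~2.3 to descend to the strictly smaller injective partition $\lambda-\omega_n$, compute its standard decomposition $(\lambda-\omega_n)^0+l\,\overline{\lambda-\omega_n}$, and invoke the inductive hypothesis to transfer the Steinberg bound back to $\lambda$.

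For the backward direction, the base case $\barlambda=0$ follows from Cox's theorem: $\lambda=\lambda^0$ is an $l$-core, so its block in degree $|\lambda^0|$ is a singleton and $\nabla(\lambda^0)=I(\lambda^0)=L(\lambda^0)$. For the inductive step, split on whether ${\rm len}(\lambda^0)<{\rm len}(\barlambda)$ or ${\rm len}(\lambda^0)\ge{\rm len}(\barlambda)$. In the first case, the chain $\stind_l(\lambda^0)\le{\rm len}(\lambda^0)<{\rm len}(\barlambda)\le\stind_l(\lambda^0)+1$ collapses, forcing $\lambda^0=\sigma_{s}$ with $s={\rm len}(\lambda^0)$ and ${\rm len}(\barlambda)=s+1$; Remark~2.8 (with $n=s+1$, $a=0$, $\mu=\barlambda$) then gives the injectivity of $\lambda=\sigma_s+l\barlambda$ directly. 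In the second case, set $n={\rm len}(\lambda^0)={\rm len}(\lambda)$; since $\lambda^0$ is an $l$-core, Lemma~2.12 reduces the task to showing $\lambda-\omega_n$ is injective, which has smaller degree and to which the inductive hypothesis applies once its standard decomposition has been analyzed.

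The main obstacle in both directions is controlling the standard decomposition of $\lambda-\omega_n$. Generically, when $\lambda^0_n\ge 2$ and $\lambda^0_{n-1}-\lambda^0_n\le l-2$, one has $(\lambda-\omega_n)^0=\lambda^0-\omega_n$ and $\overline{\lambda-\omega_n}=\barlambda$, and the Steinberg index is preserved. When $\lambda^0_n=1$ or $\lambda^0_{n-1}-\lambda^0_n=l-1$, a carry propagates into $\barlambda$ and the $l$-restricted part shifts nontrivially. The definition of $\stind_l$ is designed precisely to record the initial run of positions where such carries chain upward, and the verification that the $l$-core property of $(\lambda-\omega_n)^0$ and the length inequality for $\overline{\lambda-\omega_n}$ survive each case is the bulk of the technical work.
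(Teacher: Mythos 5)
Your overall strategy is the paper's: forward direction from Lemmas 2.6, 2.7 and 2.11 plus a descent via Lemma 2.3, backward direction from Remark 2.8 in the extremal cases and Lemma 2.12 plus induction otherwise. The problem is the step you defer as ``the bulk of the technical work'' --- controlling the standard decomposition of $\lambda-\omega_n$ --- because your description of it is wrong, and the induction lives or dies on it. You claim that when $\lambda^0_n=1$ or $\lambda^0_{n-1}-\lambda^0_n=l-1$ a ``carry propagates into $\barlambda$''. It does not: whenever ${\rm len}(\lambda^0)=n$, subtracting $\omega_n$ leaves every difference $\lambda^0_i-\lambda^0_{i+1}$ with $i\le n-1$ unchanged and replaces the last part $\lambda^0_n<l$ by $\lambda^0_n-1\ge 0$, so $\lambda^0-\omega_n$ is automatically $l$-restricted and the standard decomposition is $(\lambda-\omega_n)^0=\lambda^0-\omega_n$, $\overline{\lambda-\omega_n}=\barlambda$, with no side conditions on $\lambda^0_n$ or on $\lambda^0_{n-1}-\lambda^0_n$. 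A genuine carry occurs only when ${\rm len}(\lambda^0)<n={\rm len}(\barlambda)$, but that is exactly the case which Lemmas 2.10 and 2.11 force to be $\lambda^0=\sigma_{n-1}$ and which both you and the paper dispose of separately via Remark 2.8, so $\omega_n$ is never subtracted there.

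What actually needs checking is not the decomposition but the Steinberg index: the inductive hypothesis transfers the bound only if $\stind_l(\lambda^0-\omega_n)=\stind_l(\lambda^0)$. Subtracting $\omega_n$ changes only the difference at position $n$ (from $\lambda^0_n$ to $\lambda^0_n-1$), so the index is preserved precisely when $\stind_l(\lambda^0)\le n-2$, and can drop from $n$ to $n-1$ otherwise. This is why the paper first peels off the cases $\stind_l(\lambda^0)\in\{n-1,n\}$, i.e.\ $\lambda^0=\sigma_{n-1}+a\omega_n$, using Remark 2.8, and in the forward direction takes a minimal counterexample, which automatically satisfies $\stind_l(\lambda^0)\le n-2$; after that the verification you flagged as the main obstacle is a one-line observation. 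With your coarser case split (comparing ${\rm len}(\lambda^0)$ with ${\rm len}(\barlambda)$ only), the backward induction still meets partitions with $\stind_l(\lambda^0)=n$, and you would need to note explicitly that the drop of the index to $n-1$ is harmless because ${\rm len}(\barlambda)\le n$ holds anyway --- a point your sketch does not address. Your base case via the block theory of $l$-cores is fine and mildly different from the paper, but the proposal as written leaves the decisive combinatorial step both undone and misdiagnosed.
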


\begin{proof}    Let $n={\rm len}(\lambda)$.

\q We first  suppose $\lambda$ is injective.   Then $\barlambda$ is injective, by Lemma 2.6 and $\lambda^0$ is an $l$-core, by Lemma 2.7. We claim that also ${\rm len}(\barlambda)\leq \stind_l(\lambda^0)+1$.

\q We know that 
${\rm len}(\bar\lambda)\leq {\rm len}(\lambda^0)+1$, by Lemma 2.11. Moreover, if ${\rm len}(\bar\lambda)= {\rm len}(\lambda^0)+1$ then $\lambda^0=\sigma_{n-1}$ and so $\stind_l(\lambda^0)=n-1$, ${\rm len}(\barlambda)=n$, by Lemma 2.11, and the desired conclusion holds.  Now suppose that the 
 claim is false and that $\lambda$ is an injective partition of minimal degree  for which it fails.  Thus we have ${\rm len}(\bar\lambda)\leq {\rm len}(\lambda^0)=n$ by the case already considered.  Thus we must have that $n\geq 2$ and that   $\stind_l(\lambda^0)=m$, say, is at most $n-2$.  Now $\mu=\lambda-\omega_n=(\lambda^0-\omega_n)+l\barlambda$ is injective, by Lemma 2.3. But we have $\stind_l(\lambda^0-\omega_n)=\stind_l(\lambda^0)$ and so, by minimality
 $${\rm len}(\bar\lambda)\leq \stind_l(\lambda^0-\omega_n)+1=\stind_l(\lambda^0)+1$$
  and the claim is proved.

\q We now suppose that $\barlambda$ is injective, that $\lambda^0$ is an $l$-core and  ${\rm len}(\barlambda)\leq \stind_l(\lambda^0)+1$. We show that $\lambda$ is injective by induction on the degree of $\lambda$.

\q If the  Steinberg index of $\lambda^0$ is $n$ then $\lambda^0=\sigma_n$ and $\lambda$ is injective by Remark 2.8. 

\q If  the Steinberg index of $\lambda^0$ is $n-1$ then $\lambda^0$ has the form $\sigma_{n-1}+a\omega_n$, for some $0\leq a<l$ and this case is also covered by Remark 2.8. 

\q Thus we may assume that  $\stind_l(\lambda^0)<n-1$.   Then  ${\rm len}(\barlambda)<n$ so that ${\rm len}(\lambda^0)=n$. By Lemma 2.12 it is enough to show that $\lambda-\omega_n$ is injective.    But we have
$$\lambda-\omega_n=(\lambda^0-\omega_n)+p\barlambda$$
and so $\stind_l(\lambda^0-\omega_n)=\stind_l(\lambda^0)$ and we are done by induction.

\end{proof}

This solves the problem of determining which partitions are injective for $G(n)$. We separate out the cases.

\begin{corollary} Suppose $K$ has characteristic $0$. Then a partition $\lambda$ is injective for $G(n)$ if and only if  $\lambda^0$ is an $l$-core and ${\rm len}(\barlambda)\leq \stind_l(\lambda^0)+1$.

\end{corollary}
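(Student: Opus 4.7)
The plan is to deduce this directly from Proposition 2.14 by arguing that, in characteristic $0$, the condition ``$\barlambda$ is injective for $\dotG(n)$'' becomes vacuous.

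First I would note that $\dotG(n)$ is the classical general linear quantum group (the case $q=1$), and by the semisimplicity result already quoted before Section 1.3 (from \cite{DD} and \cite{DStd}), if $K$ has characteristic $0$ then every rational $\dotG(n)$-module is semisimple. In particular the polynomial category for $\dotG(n)$ is semisimple, so for every partition $\barlambda$ the chain
\[
\dotL(\barlambda)\subseteq \dotnabla(\barlambda)\subseteq \dotI(\barlambda)
\]
collapses to a single module. Thus $\dotnabla(\barlambda)=\dotI(\barlambda)$ and every partition is automatically injective for $\dotG(n)$.

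With this observation in hand, Proposition 2.14 gives that $\lambda$ is injective for $G(n)$ if and only if $\lambda^0$ is an $l$-core, $\barlambda$ is injective for $\dotG(n)$, and ${\rm len}(\barlambda)\leq\stind_l(\lambda^0)+1$; since the middle condition is now automatic, this reduces exactly to the statement of the corollary.

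There is no genuine obstacle here: the content is entirely in Proposition 2.14, and the only thing to check is that the characteristic $0$ hypothesis makes the injectivity assumption on $\barlambda$ trivial, which follows immediately from semisimplicity of $\dotG(n)$-modules in that setting.
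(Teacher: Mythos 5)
Your proposal is correct and follows essentially the same route as the paper: the authors likewise observe that in characteristic $0$ all $\dotG(n)$-modules are completely reducible, so $\barlambda$ is automatically injective for $\dotG(n)$ and the statement follows from Proposition 2.14. Nothing is missing.
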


\begin{proof}  In this case all $\dotG(n)$-modules are completely reducible so that $\barlambda$ is injective for $\dotG(n)$ and the result follows from Proposition 2.14.

\end{proof}

It remains to consider the case in which $K$ has characteristic $p>0$.  A partition $\lambda$ has  unique  base $p$ expansion $\lambda=\sum_{i\geq 0} p^i\lambda^i$, where each $\lambda^i$ is a $p$-restricted partition. The final results follow immediately from Proposition 2.14.

\begin{corollary} Suppose $K$ has characteristic $p>0$ and $q=1$. Let $\lambda$ be a partition with base $p$ expansion $\lambda=\sum_{i\geq 0} p^i\lambda^i$.  Then  $\lambda$ is injective if and only if each $\lambda^i$ is a $p$-core and ${\rm len}(\lambda^j)\leq \stind_p(\lambda^i)+1$, for all $0\leq i<j$,

\end{corollary}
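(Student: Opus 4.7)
The plan is a straightforward induction on the degree of $\lambda$, using Proposition 2.14 as the engine and exploiting that, when $q=1$, we have $l=p$ and $\dotG(n) = G(n)$, so ``injective for $\dotG(n)$'' and ``injective for $G(n)$'' are the same condition. The base case, when $\lambda$ is itself $p$-restricted (so $\barlambda=0$), reduces to the assertion that $\lambda = \lambda^0$ is injective iff $\lambda^0$ is a $p$-core, which is Proposition 2.14 together with Lemma 2.7 (or follows directly from Proposition 2.14 since the vacuous conditions on $\barlambda$ and on lengths are automatic).

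For the inductive step, write $\lambda = \lambda^0 + p\barlambda$ in the standard form of Proposition 2.14. Observe that the base $p$ expansion of $\barlambda$ is obtained from that of $\lambda$ by dropping the lowest digit and shifting indices; concretely, $\barlambda = \sum_{i\geq 1} p^{i-1}\lambda^i$, whose $i$th base-$p$ digit is $\lambda^{i+1}$. Proposition 2.14 (applied in the classical case) says $\lambda$ is injective iff $\lambda^0$ is a $p$-core, $\barlambda$ is injective, and $\l(\barlambda) \leq \stind_p(\lambda^0) + 1$. The inductive hypothesis applied to $\barlambda$ (which has strictly smaller degree than $\lambda$ whenever $\lambda^0 \neq 0$; the case $\lambda^0 = 0$ reduces to $\lambda = p\barlambda$, handled similarly by looking at the first nonzero digit) translates the injectivity of $\barlambda$ into the condition that each $\lambda^i$ for $i\geq 1$ is a $p$-core and $\l(\lambda^j) \leq \stind_p(\lambda^i) + 1$ for $1 \leq i < j$.

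To finish, the length constraint $\l(\barlambda) \leq \stind_p(\lambda^0) + 1$ must be recast in terms of the individual digits. Since each $p^{i-1}\lambda^i$ is a sequence of nonnegative integers and $\barlambda$ is their entrywise sum, one immediately has
\[
\l(\barlambda) \;=\; \max_{i\geq 1}\,\l(\lambda^i).
\]
Therefore $\l(\barlambda) \leq \stind_p(\lambda^0) + 1$ is equivalent to $\l(\lambda^j) \leq \stind_p(\lambda^0) + 1$ for every $j \geq 1$, which is exactly the $i=0$ instance of the global condition. Combining this with the inductive output for pairs $1 \leq i < j$ gives the condition $\l(\lambda^j) \leq \stind_p(\lambda^i) + 1$ for all $0 \leq i < j$, as required; the converse direction runs identically since each step of Proposition 2.14 is an equivalence.

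The only real point to be careful about is the identity $\l(\barlambda) = \max_{i\geq 1}\l(\lambda^i)$, together with a clean handling of the degree induction when $\lambda^0$ vanishes (so that we should induct on the largest index $i$ with $\lambda^i \neq 0$, or equivalently on the number of nonzero digits, rather than literally on $\deg(\lambda)$). Once that bookkeeping is in place, the corollary is an immediate unwinding of Proposition 2.14.
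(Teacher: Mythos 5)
Your proof is correct and is essentially the argument the paper intends: the paper offers no separate proof, stating only that the corollary ``follows immediately from Proposition 2.14,'' and your iteration of that proposition (using that $l=p$ and $\dotG(n)=G(n)$ when $q=1$, together with $\l(\barlambda)=\max_{i\geq 1}\l(\lambda^i)$) is exactly that unwinding. One small simplification: the worry about $\lambda^0=0$ is unnecessary, since $\deg(\barlambda)\leq \deg(\lambda)/p<\deg(\lambda)$ whenever $\barlambda\neq 0$, so the plain degree induction already goes through.
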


\begin{corollary} Suppose $K$ has characteristic $p>0$ and $q$ is an $l$th root of unity, with $l>1$.  Let $\lambda$ be a partition written in standard form $\lambda=\lambda^0+l\barlambda$ and write $\barlambda$ in its base $p$ expansion $\barlambda=\sum_{i\geq 0} p^i\barlambda^i$.  Then  $\lambda$ is injective if and only if   $\lambda^0$ is an $l$-core and $\barlambda^i$ is a $p$-core for each $i\geq 0$  and we have ${\rm len}(\barlambda^j)\leq \stind_l(\lambda^0)+1$  for all $j> 0$ and ${\rm len}(\barlambda^j)\leq \stind_p(\barlambda^i)+1$, for all $0<i<j$.

\end{corollary}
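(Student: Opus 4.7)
The plan is to combine Proposition 2.14 with Corollary 2.16 directly; the result truly is a ``corollary'' rather than a new theorem. First I would apply Proposition 2.14 to $\lambda=\lambda^0+l\barlambda$ in standard form, which asserts that $\lambda$ is injective if and only if three conditions hold simultaneously: (i) $\lambda^0$ is an $l$-core, (ii) $\barlambda$ is injective as a partition for the classical group $\dotG(n)$, and (iii) ${\rm len}(\barlambda)\leq \stind_l(\lambda^0)+1$. This reduces the entire question to recognising when the ``reduced'' partition $\barlambda$ is injective in the classical ($q=1$) setting.

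Next I would feed $\barlambda$ into Corollary 2.16. Since we are in characteristic $p>0$, writing $\barlambda=\sum_{i\geq 0} p^i\barlambda^i$ in its base-$p$ expansion, Corollary 2.16 says that condition (ii) is equivalent to requiring that each $\barlambda^i$ is a $p$-core and that ${\rm len}(\barlambda^j)\leq \stind_p(\barlambda^i)+1$ for all $0\leq i<j$. Substituting this into the list from Proposition 2.14 produces the $p$-core conditions on the $\barlambda^i$ together with the inequalities involving $\stind_p(\barlambda^i)$ in the statement.

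The last step is to rewrite condition (iii) in terms of the base-$p$ pieces. Because each $\barlambda^i$ is $p$-restricted, the base-$p$ sum involves no carrying and the last non-zero row of $\barlambda^j$ contributes non-trivially to the corresponding row of $\barlambda$; hence ${\rm len}(\barlambda)=\max_{j\geq 0}{\rm len}(\barlambda^j)$. Consequently condition (iii) is equivalent to the family of bounds ${\rm len}(\barlambda^j)\leq \stind_l(\lambda^0)+1$ for every $j$, and combining this with the bounds already obtained from Corollary 2.16 yields the conditions displayed in the corollary.

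There is no real obstacle here: once Proposition 2.14 and Corollary 2.16 are available, the argument is bookkeeping. The only minor routine point to check is the length identity ${\rm len}(\barlambda)=\max_j {\rm len}(\barlambda^j)$ from $p$-restrictedness, which I would verify by inspecting the last coordinate of each $\barlambda^j$ and noting that any contribution to row $n$ of $\barlambda^j$ persists in row $n$ of $\barlambda$.
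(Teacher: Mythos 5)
Your proposal is correct and is exactly the argument the paper intends: the paper's entire justification for this corollary is the remark that it ``follows immediately from Proposition 2.14'', and your chain --- Proposition 2.14 applied to $\lambda=\lambda^0+l\barlambda$, then Corollary 2.16 applied to $\barlambda$ in the classical ($q=1$) setting, together with the observation that ${\rm len}(\barlambda)=\max_{j}{\rm len}(\barlambda^j)$ because the base-$p$ digits are nonnegative partitions --- is the natural way to fill that in. One caveat: what you actually derive is ${\rm len}(\barlambda^j)\leq \stind_l(\lambda^0)+1$ for all $j\geq 0$ and ${\rm len}(\barlambda^j)\leq \stind_p(\barlambda^i)+1$ for all $0\leq i<j$, and these are the correct conditions; the printed ranges ``$j>0$'' and ``$0<i<j$'' in the statement look like an off-by-one slip, since the $j=0$ and $i=0$ cases are genuinely needed (for instance, with $l=3$, $p=2$, $\lambda^0=(1)$ and $\barlambda=\barlambda^0=(3,2,1)$, condition (iii) of Proposition 2.14 fails because ${\rm len}(\barlambda^0)=3>\stind_l(\lambda^0)+1=1$, yet the literal statement would impose no constraint), so do not weaken your conclusion to match the printed indices.
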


\begin{examples}
{\rm We give here one example of a partition that is injective and one of a partition that is not for the case in which  $K$ is a field of characteristic $3$ and $q$ is a primitive $4$th root of unity.  We  test these partitions using Corollary 2.17.\\

(i) Consider first the partition $\lambda=(20,9,6)$. We write $\lambda$ in the standard form $\lambda=(8,5,2)+4(3,1,1)$. We have that $(8,5,2)$ is a $4$-core and the partition $(3,1,1)$ is a $3$-core. Moreover $\stind_4(8,5,2)=2$ and since $(3,1,1)$ has length $3$ we get that $\lambda=(20,9,6)$ is an injective partition.\\

(ii) Consider now the partition $\mu=(17,6,4)$. We write $\mu$ in the standard form $(5,2)+4(3,1,1)$. We have that $(5,2)$ is a $4$-core and the partition $(3,1,1)$ is a $3$-core. Here, $\stind_4(5,2)=1$ and since $(3,1,1)$ has length $3$ we get that $\lambda=(17,6,4)$ is not an injective partition.}

\end{examples}

\q With these results in hand  we can now describe which Weyl modules are projective in the category of the polynomial $G(n)$-modules.

\begin{corollary}   
Let $\lambda\in \Lambda^+(n,r)$. Then the Weyl module $\Delta(\lambda)$ is a projective polynomial $G(n)$-module, and so $\Delta(\lambda)=P(\lambda)$, if and only if $\lambda$ is an injective partition.
\end{corollary}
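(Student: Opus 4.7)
The plan is to deduce this corollary directly from the definition of injective partition together with the contravariant duality between induced/Weyl modules and between injective/projective covers, as recorded at the end of Subsection 1.2. No new combinatorial input from Section 2 is needed here; the corollary is essentially a formal consequence of the duality.

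First I would recall the setup. Contravariant duality is an exact contravariant functor on $\pol(n,r)$ which fixes the simple modules $L(\mu)$ up to isomorphism. By the results stated just before the end of Subsection 1.2, we have $\Delta_n(\lambda)^\circ \cong \nabla_n(\lambda)$ and $P_n(\lambda)^\circ \cong I_n(\lambda)$, and moreover $\Delta_n(\lambda)$ has simple head $L_n(\lambda)$ (dually to $\nabla_n(\lambda)$ having simple socle $L_n(\lambda)$). In particular, the natural embedding $\nabla_n(\lambda)\hookrightarrow I_n(\lambda)$ dualises to a surjection $P_n(\lambda)\twoheadrightarrow \Delta_n(\lambda)$ (the unique one lifting the surjection $P_n(\lambda)\twoheadrightarrow L_n(\lambda)$), and this surjection is an isomorphism if and only if the embedding is.

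Putting this together, the chain of equivalences is as follows: $\lambda$ is injective precisely when $\nabla_n(\lambda)=I_n(\lambda)$ (by Definition 2.1); applying the contravariant duality functor, this holds precisely when $\Delta_n(\lambda)=P_n(\lambda)$; and this in turn is exactly the statement that $\Delta_n(\lambda)$ is the projective cover of $L_n(\lambda)$ in $\pol(n,r)$, i.e.\ a projective polynomial $G(n)$-module. Since $\Delta_n(\lambda)$ always has simple head $L_n(\lambda)$, the condition of being projective is equivalent to equalling $P_n(\lambda)$, so there is no ambiguity in the conclusion.

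I do not anticipate any real obstacle: the main point is to make sure one cites the right facts about contravariant duality (exactness, self-duality of simples, and the identifications $\Delta^\circ=\nabla$, $P^\circ=I$) from Subsection 1.2. The only mild subtlety is to note that the composite embedding/surjection corresponds correctly under duality, which is immediate from the fact that contravariant duality interchanges socles and heads and sends $L(\lambda)$ to itself.
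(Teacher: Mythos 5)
Your proposal is correct and follows exactly the paper's own argument: the corollary is an immediate consequence of the facts that $\Delta_n(\lambda)$ is the contravariant dual of $\nabla_n(\lambda)$ and $P_n(\lambda)$ is the contravariant dual of $I_n(\lambda)$ (the paper cites \cite{D3}, Section 4.1, for these). Your additional remarks on exactness, the self-duality of simples, and the dualised surjection $P_n(\lambda)\twoheadrightarrow\Delta_n(\lambda)$ merely spell out what the paper leaves implicit.
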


\begin{proof}

This is clear since $\Delta(\lambda)$ is the contravariant dual of $\nabla(\lambda)$ and $P(\lambda)$ the contravariant dual of $I(\lambda)$ (see \cite{D3}, Section 4.1).

\end{proof}

\q We end this section by pointing out that   our criterion for describing injective partitions also describes the set of regular partitions satisfying Carter's criterion.   We now describe the \lq\lq $(l,p)$-adic" valuation of a positive integer, where $l\geq 2$ is a positive integer and $p$ is a prime.  For a positive integer $r$ we define $\nu_{l,p}(r)$ to be $0$ if $l$ does not divide $r$. If $l$ divides $r$ then $\nu_{l,p}$ is $1+\nu_p(r/l)$ (where $\nu_p$ denotes the $p$-adic valuation on non-zero integers).

\q For the rest of this section $p$ denotes the characteristic of our base field $K$ and $l$ is the smallest integer such that $1+q+\cdots+q^{l-1}=0$.  As above we shall say that a partition $\lambda$ is injective if  the $G(n)$-module $\nabla(\lambda)$ is polynomially injective.  If we wish to emphasise the roles of $l$ and $p$ we shall say that $\lambda$ is $(l,p)$-injective.

% Thus in case $l=1$ and $p=0$ every partition is $(l,p)$ injective. 

\q    We write $[\lambda]$ for the diagram of a partition $\lambda=(\lambda_1,\lambda_2,\ldots)$.   For $(a,b)\in [\lambda]$ we write $h_{(a,b)}^\lambda$ for the corresponding hook length (i.e., $\lambda_a+\lambda_b'-a-b+1$, where $\lambda'=(\lambda_1',\lambda_2',\ldots)$ is the transpose of $\lambda$).

\q Recall that a partition $\lambda$ is $l$-regular if there is no $t\geq 0$ with $\lambda_{t+1}=\lambda_{t+2}=\cdots=\lambda_{t+l}>0$.

\q We shall say that an  $l$-regular partition $\lambda$ is a Carter partition, or that $\lambda$ satisfies Carter's criterion, if $\nu_{l,p}$ is constant on the hook lengths corresponding to nodes in each column of $[\lambda]$, i.e., if $\nu_{l,p}(h_{(a,b)}^\lambda)$ is independent of $a$, for $(a,b)\in [\lambda]$. We shall also say  that a Carter partition is a Carter partition relative to $(l,p)$, when  we wish to emphasise the roles of $l$ and $p$.

\bs

\q We shall use repeatedly the well-known (and easy to prove) fact that a partition $\lambda$ is an $l$-core if and only if no hook length $h^\lambda_{(a,b)}$ is divisible by $l$.

\begin{proposition}  A partition $\lambda$ is injective if and only if it is an $l$-regular Carter partition.

\end{proposition}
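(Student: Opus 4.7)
My plan is to prove Proposition 2.21 by induction on the degree of $\lambda$, matching the recursive characterisation of injectivity from Proposition 2.14 with an analogous recursion for the $l$-regular Carter condition.

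For the base case, when $\lambda = \lambda^0$ is $l$-restricted, Lemma 2.7 and Remark 2.8 give that $\lambda$ is injective iff it is an $l$-core. Any $l$-core is trivially $l$-regular Carter: no hook length is divisible by $l$, so $\nu_{l,p}$ vanishes identically on hooks making column-constancy vacuous, and $l$ consecutive equal positive parts would produce a hook of length exactly $l$. Conversely, an $l$-restricted $l$-regular Carter partition is an $l$-core: if some column $b$ contained a hook divisible by $l$, column-constancy would force every hook in column $b$ to be divisible by $l$, so in particular the smallest hook in that column, $h^\lambda_{(\lambda'_b, b)} = \lambda_{\lambda'_b} - b + 1$, would be at least $l$; combining with $\lambda_{\lambda'_b + 1} < b$ (or $\lambda_{\lambda'_b + 1} = 0$ if $\lambda'_b = \l(\lambda)$) yields $\lambda_{\lambda'_b} - \lambda_{\lambda'_b + 1} \geq l$, contradicting $l$-restrictedness.

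The main inductive step is to establish the following recursive characterisation of the Carter condition: $\lambda$ is $l$-regular Carter if and only if (a) $\lambda^0$ is an $l$-core, (b) $\bar\lambda$ is $p$-regular $(p,p)$-Carter, and (c) $\l(\bar\lambda) \leq \stind_l(\lambda^0) + 1$. The crucial combinatorial input is a row- and column-respecting bijection $\phi$ between cells $(a,b) \in [\lambda]$ with $l \mid h^\lambda_{(a,b)}$ and cells of $[\bar\lambda]$, satisfying $h^\lambda_{(a,b)} = l \cdot h^{\bar\lambda}_{\phi(a,b)}$, valid precisely when (a) and (c) hold. I would establish $\phi$ by a direct arm/leg calculation: the identity $\lambda_a \equiv \lambda^0_a \pmod{l}$ gives $\lambda_a - b \equiv \lambda^0_a - b \pmod{l}$, and the $l$-core hypothesis on $\lambda^0$ combined with the length bound (c) allows one to isolate precisely the cells of $[\lambda]$ whose hook length is divisible by $l$ and to identify them with the cells of $[\bar\lambda]$. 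Via the identity $\nu_{l,p}(lh) = 1 + \nu_{p,p}(h)$, the column-constancy for $\lambda$ (on divisible hooks, and automatic on non-divisible ones) transfers to column-constancy for $\bar\lambda$; the $l$-core hypothesis on $\lambda^0$ guarantees that each column of $[\lambda]$ contains hooks all of a single divisibility type with respect to $l$, by an argument parallel to the base case converse.

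The $l$-regularity of $\lambda$ follows from the difference formula $\lambda_i - \lambda_{i+1} = (\lambda^0_i - \lambda^0_{i+1}) + l(\bar\lambda_i - \bar\lambda_{i+1})$: $l$ consecutive equal positive parts in $\lambda$ would force $l$ consecutive equal parts in both $\lambda^0$ and $\bar\lambda$ in the same range, but $l$ equal positive parts in $\lambda^0$ is impossible by the core property, and $l$ equal positive parts in $\bar\lambda$ past the support of $\lambda^0$ is precluded by (c) together with $\l(\lambda^0) \geq \stind_l(\lambda^0)$. The induction then closes by applying the proposition recursively to $\bar\lambda$ with $l$ replaced by $p$: Corollary 2.17 characterises $\bar\lambda$ injective for $\dot G$ by the same recursive shape as Proposition 2.14, which by inductive hypothesis equals the $p$-regular $(p,p)$-Carter condition, yielding the claimed equivalence. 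The hardest step is establishing the hook length bijection $\phi$ rigorously; it requires careful analysis of how the transpose $\lambda'$ interacts with the decomposition $\lambda = \lambda^0 + l\bar\lambda$ (componentwise on parts, but not on the transpose), and it is here that hypothesis (c) is essentially used, since failure of the length bound produces divisible hooks with no preimage in $[\bar\lambda]$ and disrupts column constancy.
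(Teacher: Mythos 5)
Your overall architecture is sound and genuinely different from the paper's: you reduce everything to a single combinatorial equivalence --- $\lambda$ is an $l$-regular Carter partition if and only if $\lambda^0$ is an $l$-core, $\barlambda$ is $p$-regular $(p,p)$-Carter, and ${\rm len}(\barlambda)\leq \stind_l(\lambda^0)+1$ --- and then match this against Proposition 2.14 by induction on degree. That equivalence is true, and your base case (restricted partitions: injective iff $l$-core iff $l$-regular Carter) is correct, though the forward implication there should be credited to Proposition 2.14 with $\barlambda=0$ rather than to Remark 2.8. The difficulty is that the recursive Carter characterisation is the \emph{entire} content of the proposition once Proposition 2.14 is available, and you assert it rather than prove it. The hook-length bijection $\phi$ with $h^\lambda_{(a,b)}=l\cdot h^{\barlambda}_{\phi(a,b)}$ is a genuine lemma (essentially the statement that under (a) and (c) the $l$-core of $\lambda$ is $\lambda^0$ and its $l$-quotient is concentrated in a single component equal to $\barlambda$); the ``direct arm/leg calculation'' you propose runs immediately into the fact that $\lambda\mapsto\lambda'$ does not respect the decomposition $\lambda=\lambda^0+l\barlambda$, which you acknowledge but do not resolve. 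Until $\phi$ is actually constructed and its row/column-compatibility verified, the transfer of column-constancy between $[\lambda]$ and $[\barlambda]$ is not established.

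A second, more structural gap: for the direction ``Carter $\Rightarrow$ injective'' you must first derive (a) and (c) from the Carter condition, and by your own framing $\phi$ is only available \emph{after} (a) and (c) are known, so it cannot be used there. Your proposal says almost nothing about how the Carter condition forces $\lambda^0$ to be an $l$-core, nor how it forces the length bound (beyond the remark that failure of (c) ``disrupts column constancy''). This is precisely where the paper's proof does its real work, by an entirely different route that avoids any global bijection: a minimal counterexample argument combined with the shift identity $h^\lambda_{(a,b)}=h^{\lambda-\omega_n}_{(a,b-1)}$ for $b>1$ (so that only first-column hook lengths ever need explicit computation), the case division on whether ${\rm len}(\lambda^0)={\rm len}(\lambda)$, and the first-row-removal argument that pins down $\lambda^0=\sigma_{n-1}$ via the divisibility of $h^\lambda_{(1,1)}$ and $h^\lambda_{(2,1)}$. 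If you want to keep your global-bijection strategy you need to supply both missing pieces: a rigorous construction of $\phi$ (a $\beta$-number/abacus argument is the natural tool), and an independent argument that a Carter partition satisfies (a) and (c); otherwise the paper's local, first-column-only induction is the more economical path.
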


\begin{proof}  As usual  we write a partition $\lambda$ in the form  $\lambda^0+l\barlambda$,  for partitions $\lambda^0$, $\barlambda$  with $\lambda^0$ being $l$-restricted. 

\q   We first show that an injective partition $\lambda$ is an $l$-regular Carter partition.  We argue by induction on degree. We may assume that $\lambda\neq 0$ and that the result holds for partitions of smaller degree. 
    By Lemma 2.7, $\lambda^0$ is an $l$-core. Now if, for some $t\geq 0$, we have $\lambda_{t+1}=\lambda_{t+2}=\cdots=\lambda_{t+l}>0$, then we get $\lambda_{t+1}^0\equiv \cdots \equiv \lambda^0_{t+l}$ mod $l$, and since $\lambda^0$ is $l$-restricted,  $\lambda_{t+1}^0 =\cdots = \lambda^0_{t+l}$. If $\lambda_{t+1}^0>0$ then $\lambda^0$ contains a removable skew $l$-hook, which is impossible since $\lambda^0$ is an $l$-core. Thus we have $\lambda_{t+1}^0=\cdots=\lambda^0_{t+l}=0$ and so $\lambda^0$ has length at most $t$ and $\barlambda_{t+1}=\cdots=\barlambda_{t+l}>0$ and so the length of $\barlambda$ is at least $t+l\geq t+2$,  and this is impossible since the length of $\barlambda$ is at most one more than the length of $\lambda^0$, by  Proposition 2.11. Hence $\lambda$ is $l$-regular. 

   \q Let $n$ be the length of $\lambda$. Let   $\mu=\lambda-\omega_n$.  Then $\mu$ is injective and hence, by the inductive hypothesis, an $l$-regular Carter partition.   Moreover, for $b>1$ and $(a,b)\in [\lambda]$ we have $h_{(a,b)}^\lambda=h_{(a,b-1)}^\mu$, and hence $\nu_{l,p}(h_{(a,b)}^\lambda)=\nu_{l,p}(h_{(a,b-1)}^\mu)$ and this is independent of $a$. It remains to check the case  $b=1$. 
   
   \q Assume first that $\lambda^0$ has length $n$. Then we have 
   $$h_{(a,1)}^\lambda=\lambda^0_a+l\barlambda_a+n-a=h_{(a,1)}^{\lambda^0}+l\barlambda_a.$$
    Since $\lambda^0$ is an $l$ core this is not a multiple of $l$ and $\nu_{l,p}$ takes constant value $0$ on $h_{(a,1)}^\lambda$, $1\leq a \leq n$.

\q Now assume that $\lambda^0$ has length less then $n$. Then its length is $n-1$ and indeed, by Lemma 2.11, we have  $\lambda^0=\sigma_{n-1}$.  Thus we have $\lambda^0_a=(n-a)(l-1)$ and 
$$h_{(a,1)}^\lambda=\lambda_a^0+l\barlambda_a+n-a=(n-a)(l-1)+l\barlambda_a+(n-a)=l(\barlambda_a+n-a)$$
and hence $\nu_{l,p}(h_{(a,1)}^\lambda)=\nu_p(h_{(a,1)}^\barlambda)+1$, for $1 \leq a \leq  n$, and this is independent of $a$.  Hence $\lambda$ is an $l$-regular Carter partition.

\q Now assume for a contradiction that there is a $l$-regular Carter partition that is not injective and let $\lambda$ be such a partition of smallest possible degree.   Let $n$ be the length of $\lambda$.  We first show that $\lambda^0$ is an $l$-core.  

\q  Assume first that  $\lambda^0$ has length $n$. Note that $\lambda-\omega_n=(\lambda^0-\omega_n)+l\barlambda$ is an $l$-regular Carter partition, and hence injective. Hence $\lambda^0-\omega_n$ is an $l$-core, by the minimality assumption,  and hence  $l$ does not divide $h^{\lambda^0}_{(a,b)}=h^{\lambda^0-\omega_n}_{(a,b-1)}$, for $(a,b)\in [\lambda^0]$ and $b>1$.  We now consider $h^\lambda_{(a,1)}$, for $1\leq a\leq n$. We have $\nu_{l,p}(h^\lambda_{(n,1)})=\nu_{l,p}(\lambda_n^0+l\barlambda)$, and this is $0$  as $0<\lambda^0_n<l$. Hence we have $\nu_{l,p}(h^\lambda_{(a,1)})=0$, i.e., $\lambda_a^0+l\barlambda_a+n-a$ is not divisible by $l$,  i.e., $\lambda_a^0+n-a=h^{\lambda^0}_{(a,1)}$ is not divisible by $l$,  for all $1\leq a\leq n$. Thus $\lambda^0$ is an  $l$-core. 

\q Now assume that $n$ is greater than the length of $\lambda^0$. Then $\lambda-l\omega_n=\lambda^0+l(\barlambda-\omega_n)$ is Carter and therefore  injective and so $\lambda^0$ is a core, by the minimality assumption.

\q  Suppose once more  that $\lambda^0$ has length $n$. Since  $\lambda-\omega_n$ is Carter, and hence injective, we get that $\lambda$ is injective from Lemma 2.2.  Hence we may assume that $\lambda^0$ has length less than $n$.

\q Consider the  partition $\mu=(\lambda_2,\lambda_3,\ldots)$  whose diagram is obtained by removing the first  row of $[\lambda]$.  Then $\mu$ is an $l$-regular Carter partition and hence injective. We write $\mu=\mu^0+l\barmu$,  as usual.  Then the length of $\mu^0$ is less than the length of $\mu$,  which is  $n-1$.   This implies, by Lemma 2.11 that $\mu^0=\sigma_{n-2}$, i.e., we have $\lambda_a^0=(n-a)(l-1)$,   for $2\leq a\leq n$.  Now 
$$h_{(2,1)}^\lambda=\lambda_2^0+l\barlambda_2+(n-2)=l(n-2)+l\barlambda_2$$
which is divisible by $l$.  Thus we know that $h_{(1,1)}^\lambda$ is also divisible by $l$, i.e., $l$ divides
 $\lambda_1+n-1=(\lambda_1-\lambda_2)+1+h^\lambda_{(2,1)}$. Hence $\lambda_1-\lambda_2+1$ is also divisible by $l$ and therefore  $\lambda_1^0-\lambda_2^0+1$ is divisible by $l$. Thus we have $\lambda_1^0-\lambda^0_2=l-1$, $\lambda^0_1=(n-1)(l-1)$ and $\lambda^0=\sigma_{n-1}$.  By  Proposition 2.14 we will be done if we show that $\barlambda$ is a $p$-regular Carter partition (and hence injective).

\q Now $\lambda-l\omega_n=
\lambda^0+l(\barlambda-\omega_n)$ is an $l$-regular Carter partition, and so  injective and hence, by Proposition 2.14,  $\barlambda-\omega_n$ is Carter. Thus $\barlambda$ is $p$-regular and,  moreover,  $\nu_{l,p}(h^\lambda_{(a,b)})$ is independent of $a$, for $b>1$, $(a,b)\in [\lambda]$.  It remains to prove that $\nu_p$ is constant on $h^\barlambda_{(a,1)}$, for $1\leq a\leq n$. However, we have 
\begin{align*}\nu_{l,p}(h_{(a,1)}^\lambda))&=\nu_{l,p}(\lambda_a+n-a)=\nu_{l,p}((n-a)(l-1)+l\barlambda_a+n-a)\cr
&=\nu_{l,p}(l(\barlambda_a+n-a))=1+\nu_p(h^\barlambda_{(a,1)})
\end{align*}
and $\nu_{l,p}(h_{(a,1)}^\lambda)$ is independent of $a$ and so $\nu_p(h^\barlambda_{(a,1)})$ is independent of $a$, and we are done.

\end{proof}

\begin{remark}  {\rm{It is convenient to treat the case in which $K$ has characteristic $0$ separately. In that case we say that a partition $\lambda$ is a Carter partition relative to $(l,0)$, if for $1\leq b\leq \lambda_1$ either  $h_{a,b}$ is divisible by $l$ for all $1\leq a\leq \lambda_b'$, or no  $h_{a,b}$ is divisible by $l$ for all $1\leq a\leq \lambda_b'$.  Then one may easily check that $\lambda$ is injective relative to $(l,0)$ if and only if it is a Carter partition relative to $(l,0)$.}} 
\end{remark}

\begin{remark}  {\rm{The irreducible Weyl modules for the general linear group were determined by Jantzen, \cite{JantzenThesis}, Teil II, 8, Satz 9, using the Jantzen sum formula. This formula was used, in the quantised case, by Mathas, Theorem 5.39,  to determine the irreducible Weyl modules for the quantised Schur algebra $S_q(n,n)$. 
We here indicate the connection between this and our point of view.  Let $n$ be a positive integer and $r\geq 0$. For $\lambda\in \Lambda^+(n,r)$ we write $T_n(\lambda)$ for the corresponding tilting module for $G(n)$, as in \cite{D3}.  

\q We now fix a partition $\lambda$, let $r$ be its degree.   Assume that $\lambda$ is injective. Then 
Remark 2.2,  we have  $[\mu:\lambda]=0$ for every partition $\mu>\lambda$.  Let $\tau\in \Lambda(r,r)$. Then we have 
$(T_r(\lambda'):\nabla_r(\tau'))=[\tau:\lambda]$, by \cite{D3},  4.2, (14).  It follows that we have that $(T_r(\lambda'):\nabla_r(\tau'))$ is one if $\tau'=\lambda'$ and $0$ otherwise, i.e., we have $T_r(\lambda')=\nabla_r(\lambda')=\Delta_r(\lambda')=L_r(\lambda')$.  Reversing the steps we see that if $\lambda$ is a partition of degree $r$ such that  $\Delta_r(\lambda')=L_r(\lambda')$ then $\lambda$ is injective.  Hence a partition of degree $r$ is injective if and only if the Weyl module $\Delta_r(\lambda')$ for $G(r)$, equivalently for the Schur algebra $S_q(r,r)$, is irreducible. Such partitions are described, in terms of Carter's condition, by Mathas, \cite{MathasBook}, Theorem 5.39. Thus Mathas's Theorem may be used to describe the injective partitions.  Conversely, one may use the above to give an alternative proof of Mathas's Theorem.}}

\end{remark}

  \bs\bs\bs

  %section3

\section{Specht modules which are Young modules}

\q In this section we express our main result  in terms of the representation theory of Hecke algebras and so  determine the partitions  $\lambda$ for which  the corresponding Specht module $\Sp(\lambda)$ is a Young module, for $l\geq3$.   We give an example point out that in the case $l=2$ not all such $\lambda$ are injective partitions. We intend to study this phenomenon in more detail in a subsequent work.

\begin{definition}   We call  a partition $\lambda$ of degree $r$ a  Young partition for $H(r)$,   or just a Young partition, if the corresponding Specht  module $\Sp(\lambda)$,  is a Young module.

\end{definition}

\begin{proposition}    

(i) Every injective partition is a Young partition.

Assume now that $l\geq 3$ and let $\lambda$ be a partitions of degree $r$. 

(ii) If $\lambda$ is a Young partition then $\lambda$ is injective and  $\Sp(\lambda)=Y(\lambda)$.

\end{proposition}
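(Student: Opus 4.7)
For part (i), the plan is essentially immediate: if $\lambda$ is injective then $\nabla_n(\lambda)=I_n(\lambda)$, so applying the exact Schur functor $f$ of Proposition 1.3.1 yields $\Sp(\lambda)=f\nabla_n(\lambda)=fI_n(\lambda)=Y(\lambda)$, and $\Sp(\lambda)$ is a Young module.

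For part (ii), suppose $\Sp(\lambda)\cong Y(\mu)$ for some $\mu\in\Lambda^+(n,r)$. The first step is to show $\mu=\lambda$. By Proposition 1.3.3(ii), $Y(\mu)$ admits a Specht filtration $0=Y_0<Y_1<\cdots<Y_s=Y(\mu)$ whose sections $\Sp(\mu^i)$ are ordered so that smaller partitions appear lower, and the multiplicity of $\Sp(\nu)$ equals $(I_n(\mu):\nabla_n(\nu))=[\nu:\mu]$ by Remark 2.2. Since $[\nu:\mu]\neq 0$ forces $\nu\geq\mu$, and the multiplicity at $\nu=\mu$ is one, the section $\Sp(\mu)$ appears exactly once and must occur at the bottom; in particular $\Sp(\mu)$ embeds into $Y(\mu)\cong\Sp(\lambda)$, so $\Hom_{H(r)}(\Sp(\mu),\Sp(\lambda))\neq 0$, and Proposition 1.3.2(ii) (which requires $l\geq 3$) gives $\mu\geq\lambda$. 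Dually, the top section $\Sp(\mu^s)$ is a quotient of $Y(\mu)\cong\Sp(\lambda)$, with $\mu^s\geq\mu$; applying Proposition 1.3.2(ii) again yields $\lambda\geq\mu^s\geq\mu$. Combining the two inequalities forces $\mu=\lambda$, so $\Sp(\lambda)\cong Y(\lambda)$.

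For the second step, with $\Sp(\lambda)\cong Y(\lambda)$ in hand, use the Specht filtration of $Y(\lambda)=fI_n(\lambda)$ from Proposition 1.3.3(ii) together with the reciprocity $(I_n(\lambda):\nabla_n(\nu))=[\nu:\lambda]$ of Remark 2.2 to obtain the dimension identity
\[
\dim\Sp(\lambda)=\dim Y(\lambda)=\sum_{\nu}(I_n(\lambda):\nabla_n(\nu))\,\dim\Sp(\nu).
\]
Since the $\nu=\lambda$ term already accounts for $\dim\Sp(\lambda)$, and every remaining summand is non-negative with $\dim\Sp(\nu)>0$, all other multiplicities $(I_n(\lambda):\nabla_n(\nu))$ must vanish. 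The good filtration of $I_n(\lambda)$ therefore consists of the single section $\nabla_n(\lambda)$, so $I_n(\lambda)=\nabla_n(\lambda)$ and $\lambda$ is injective.

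The main obstacle is the first step of part (ii): one must produce both a submodule isomorphic to $\Sp(\mu)$ and a quotient of the form $\Sp(\mu^s)$ with $\mu^s\geq\mu$ inside $Y(\mu)$, and then invoke the dominance-order bound on Hom-spaces to pin $\lambda$ and $\mu$ against each other. This is precisely where both the refined ordering of the Specht filtration in Proposition 1.3.3(ii) and the hypothesis $l\geq 3$ (needed for Proposition 1.3.2(ii)) are essential; once $\mu=\lambda$ is secured, the second step is routine dimension bookkeeping.
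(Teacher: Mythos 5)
Your proposal is correct and takes essentially the same route as the paper: both arguments pin $\lambda$ against $\mu$ by applying the dominance bound of Proposition 1.3.2(ii) (hence the hypothesis $l\geq 3$) to the copy of $\Sp(\mu)$ sitting at the bottom of $Y(\mu)\cong\Sp(\lambda)$ and to a section $\Sp(\tau)$ with $\tau\geq\mu$ occurring as a quotient, and then collapse the resulting chain of inequalities. The only differences are organizational: the paper argues by contradiction, producing the upper section from the cokernel of $\nabla(\mu)\hookrightarrow I(\mu)$ after applying the Schur functor rather than from Proposition 1.3.3(ii), and your final dimension count is not needed, since the sandwich $\lambda\geq\mu^s\geq\mu\geq\lambda$ already forces the Specht filtration of $Y(\mu)$ to have a single section, whence $I(\lambda)=\nabla(\lambda)$ immediately.
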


\begin{proof}  (i) holds by Proposition 1.3.1. 

(ii)  For $l\geq 4$ the result follows from the fact that, for $n\geq r$  and a polynomial $G(n)$-module $X$ of degree $r$ with a good filtration, the Specht module multiplicities in $fX$ agree with the corresponding $\nabla$-module multiplicities in $X$ (see the remarks before Proposition 1.3.3).  However, we here give a different argument which also covers the case $l=3$. 

\q So let $\lambda$ be a Young partition of degree $r$, say, and let $n\geq r$. Thus we have $\Sp(\lambda)=Y(\mu)$, for some partition $\mu$, of degree $r$. Assume, for a contradiction, that $\lambda$ is not injective. Then we have that $\mu$ is not injective, since for otherwise $\nabla(\mu)=I(\mu)$ and so $\Sp(\mu)=Y(\mu)$, and then $\Sp(\lambda)=Y(\mu)=\Sp(\mu)$ which gives  $\lambda=\mu$ by Lemma 1.3.2(ii). 

\q Since $\mu$ is not injective, we have a short exact sequence 
$$0\rightarrow \nabla(\mu)\rightarrow I(\mu)\rightarrow X\rightarrow0$$
where  $X$ is a non-zero $G(n)$-module with a good filtration with sections of the form $\nabla(\tau)$, with $\tau\in \Lambda^+(n,r)$, $\tau>\mu$. Applying $f$ we obtain a short exact sequence 
 $$0\rightarrow \Sp(\mu)\rightarrow \Sp(\lambda)\rightarrow fX\rightarrow0$$
 where $fX$ is  a non-zero  $H(r)$-module with a filtration whose sections have the form  $\Sp(\tau)$, with $\tau\in \Lambda^+(n,r)$, $\tau>\mu$.  Since $\Hom_{H(r)}(\Sp(\mu),\Sp(\lambda))\neq 0$ we have $\mu\geq \lambda$, by Proposition 1.3.2 (ii).  Since $\Hom_{H(r)}(\Sp(\lambda),fX)\neq 0$ we have, by left exactness, $\Hom_{H(r)}(\Sp(\lambda),\Sp(\tau))\neq 0$, for some section $\Sp(\tau)$ of $fX$, and hence by Proposition 1.3.2(ii), $\lambda\geq \tau$, for some $\tau>\mu$.  But now we have $\lambda\geq \tau>\mu\geq \lambda$, which is impossible. Therefore $\lambda$ is  injective and $\Sp(\lambda)=Y(\lambda)$.
\end{proof}

\begin{remark} {\rm{We note that in \cite{Hem}, Proposition 1.1,  an argument is given to establish the above result in the classical case, in characteristic  $p\geq 3$. We needed to adopt the above, somewhat different, strategy  since Hemmer's argument relies on the result \cite{HN}, 3.4.2,  valid  for $p\geq 5$.}}
\end{remark}

%\subsection*{An Example}

\begin{example} \rm  We give here an example to point out that Proposition 3.2(ii) does not in general hold for $l=2$ and we may have $\Sp(\lambda)\neq Y(\lambda)$ but $\Sp(\lambda)=Y(\mu)$, for distinct partitions $\lambda$ and $\mu$. We take $q=1$ and take  $K$ to be a field of characteristic   $2$. We will assume of the reader some familiarity   with the  description of the basis of the Specht modules via  polytabloids and of the usual basis  of the permutation modules. More details   can be found in \cite{James}.

\q We consider the partition $\lambda=(3,1,1)$. Writing $\lambda$ in the usual form $\lambda=\lambda^0+2\bar\lambda$ we have  $(3,1,1)=(1,1,1)+2(1)$ and by Corollary 2.16  this is not an injective partition,  since $\lambda^0=(1,1,1)$ is not a $2$-core. However we will show that $(3,1,1)$ is a Young partition and in fact that $\Sp(3,1,1)=Y(3,2)$. In order to do this we study first the injective module $I(3,2)$ for $G(n)$, with $n\geq 5$. It is easy to see that  we have a short exact sequence
 
$$0\rightarrow \nabla(3,2)\rightarrow I(3,2)\rightarrow \nabla(5)\rightarrow0.$$

Applying  the Schur functor we get

 $$0\rightarrow \Sp(3,2)\rightarrow Y(3,2)\rightarrow \Sp(5)\rightarrow0.$$
 
 Moreover  the permutation module $M(3,2)$ is the direct sum $M(3,2)\cong Y(3,2)\oplus \Sp(4,1)$,  since $(4,1)$ and $(3,2)$ have different cores. Also, we  have  $\dim\Sp(3,1,1)=\dim Y(3,2)=6$.  We identify  the Specht module $\Sp(3,1,1)$ with  a submodule of  $M(3,2)$. The sets $\{i,j\}$ with $1\leq i\neq j\leq5$ form a basis of $M(3,2)$ and the polytabloids  $e_T$,  with $T$ a standard  tableau of shape $(3,1,1)$,  form a basis of $\Sp(3,1,1)$. We define  $\phi: \Sp(3,1,1)\rightarrow M(3,2)$ to be the $K$-linear map sending   the poytabloid corresponding to the standard tableau with  entries $1<i<j$ in its first column to the sum $\{1i\}+\{1j\}+\{ij\}$. It is easy to see that $\phi$ commutes with the action of the symmetric group $\Sym_5$  and that it is injective. Since the cores of $(3,1,1)$ and $(4,1)$ are different 
 $\phi$ gives an embedding of  $\Sp(3,1,1)$ into the Young module $Y(3,2)$. Moreover, $\dim Y(3,2)=\dim\Sp(3,1,1)$ so that $\Sp(3,1,1)\cong Y(3,2)$.

\end{example}

\section*{Acknowledgement}

The second author gratefully acknowledges the financial  support of EPSRC Grant EP/L005328/1.

\bs\bs\bs

%biblio.tex


\begin{thebibliography}{99}

\bibitem{ABW} {K.  Akin, D. A. Buchsbaum and J Weyman, \emph{Schur functors and Schur complexes},  Advances in Mathematics {\bf 44},  207-278, 1982}


%\bibitem{B-O-X}{C. Bessenrodt, J. Olsson and M. Xu, \emph{On Properies of the Mullineux Map with an Application to Schur Modules}, Math. Proc. Camb. Phil. Soc. {\bf 126}, 443-459 (1999)}

%\bibitem{BK}{J. Brundan and A. Kleshchev, \emph{On Translation Functors for General Linear and Symmetric Groups}, Proc. London Math. Soc. {\bf 80}, 75-106 (2000)}


%\bibitem{CL}{R. W. Carter  and G. Lusztig, \emph{On the Modular Representations of the General Linear and Symmetric Groups}, Math. Z. {\bf 136}, 193-242 (1974)}


%\bibitem{CP}{A. Cox and A. Parker, \emph{Homomorphisms Between Weyl Modules For $SL_3(k)$}, Trans. Amer. Math. Soc {\bf 358}, No 9, 4159-4207 (2006).}




%\bibitem{JB} {J. Brundan, \emph{Modular branching rules and the Mullineux map for Hecke algebras of type A},  Proc. London Math. Soc. (3) {\bf 77}  (1998),   551-581.}

\bibitem{Cox} {A. G. Cox, \emph{The blocks of the $q$-Schur algebra}, J. ALgebra {\bf 207} (1998), 306-325.}

%\bibitem{CPS} {E. Cline, B. Parshall ,  L. Scott and W. van der Kallen, \emph{Rational and generic cohomology}, Invent. Math. {\bf 39}, (1977), 143-163.}


\bibitem{DD} {R. Dipper and S. Donkin, \emph{Quantum ${\rm SL}_n$},  Proc. Lond. Math. Soc. {\bf 63}, 165-211, (1991).}

\bibitem{DJ}{R. Dipper and G. D. James, \emph{$q$-tensor space and $q$-Weyl modules}, Trans. Amer. Math. Soc. {\bf327},251-282, (1991).}




\bibitem{D4}{S. Donkin, \emph{Rational Representations of Algebraic Groups: Tensor Products and Filtrations}, Lecture Notes in Math.  1140, Springer  1985.}


%\bibitem{D0} {S. Donkin, \emph{On Schur Algebras and Related Algebras II}, J.   Algebra, {\bf 111},  354-364,  1987.}




%\bibitem{D1} {S. Donkin, \emph{On tilting modules for algebraic groups},  Math. Z. {\bf 212}, 39-60, 1993.}

\bibitem{D1} {S. Donkin, \emph{Tilting modules for algebraic groups and finite dimensional algebras}, Handbook of Tilting Theory, Cambridge University Press.}

\bibitem{D2} {S. Donkin, \emph{On Schur Algebras and Related Algebras IV: The Blocks of the Schur Algebras}, J.   Algebra, {\bf 168}, 400-429, 1994.}

\bibitem{DStd}{S. Donkin, \emph{Standard Homological Properties  for Quantum  ${\rm GL}_n$}, J. Algebra, {\bf 181}, 235-266, 1996.}


\bibitem{D3} {S. Donkin, \emph{The $q$-Schur algebra},  LMS Lecture Notes 253, Cambridge University Press 1998.}


\bibitem{DG1} {S. Donkin and H. Geranios, \emph{Endomorphism Algebras of Some Modules for Schur Algebras and Representation Dimension},  Algebras and Representation Theory, {\bf 17}, (2014),  623-642.}


%\bibEndomorphism Algebras of Some Modules for Schur Algebras and Representation Dimension, Algebras and Representation Theory, {\bf 17}, (2014),  623-642.


%\bibitem{DG2} {S. Donkin and H. Geranios, \emph{Polynomially and Infinitesimally Injective Modules}, J.   Algebra {\bf 392 },  125-141, 2013.}


%\bibitem{DG3} {S. Donkin and H. Geranios, \emph{Composition factors of a tensor product of truncated  symmetric powers},  submitted.}


%\bibitem{DG4} {S. Donkin and H. Geranios, \emph{Composition factors of a tensor product of   symmetric powers},  submitted.}


%\bibitem{DG6}{S. Donkin and H. Geranios, \emph{Specht and Young modules over Hecke algebras at parameter $-1$},  in preparation.}

%\bibitem{DW}{S. R. Doty and G. Walker, \emph{Modular symmetric functions and irreducible modular representations of general linear groups}. J.    Pure and Applied Algebra, {\bf 82}, 1-26, 1992.}


%\bibitem{E} {K. Erdmann, \emph{$Ext^1$ for Weyl modules of $SL_2(K)$}, Math. Z  {\bf218}, 447-459 (1995).}

\bibitem{EGS}{K. Erdmann, J. A. Green and M. Schocker , \emph{Polynomial Representations of ${\rm GL}_n$,  Second Edition with an Appendix on Schenstead Correspondence and Littelmann Paths}, Lecture Notes in Mathematics 830, Springer 2007.}

%\bibitem{FK} {B. Ford and A. S. Kleshchev, \emph{A Proof of the Mullineux Conjecture}, Math. Z. {\bf 226}, 267-308,  1997.}

%\bibitem{H} {D. J. Hemmer,  \emph{A Row Removal Theorem for the $\Ext^1$ Quiver of Symmetric Groups and Schur Algebras}, Proc. Amer. Math. Soc. Vol. {\bf 133}, No. {\bf 2}, 403-414,  2004.}


%\bibitem{H2}{D.J. Hemmer, \emph{Fixed-point functors for symmetric groups and Schur algebras}, Journal of Algebra, {\bf 280}, (2004), 295-312.}


\bibitem{Hem}{D. J Hemmer. \emph{Irreducible Specht modules are signed Young modules}, Journal of Algebra, {\bf 305} (2006), 433-441.}


\bibitem{HN}{David J. Hemmer and Daniel K. Nakano, \emph{Specht filtrations for Hecke algebras of type $A$}, J. London Math. Soc. {\bf 69}, (2004), 623-638.}



\bibitem{James}  {G. D. James,  \emph{The Representation Theory of the Symmetric Groups}, Lecture Notes in Mathematics 682, Springer 1970.}


%\bibitem{James} {G. D. James, \emph{On the decomposition matrices of the symmetric groups, III}, J. Algebra {\bf 71} (1981), 115-122.}

%\bibitem{JK}  {G. D. James, A. Kerber  \emph{The Representation Theory of the Symmetric Group}, Addison-Wesley, London, 1981.}

%\bibitem{JP}  {G. D. James and M. H. Peel \emph{Specht Series for Skew Representations of Symmetric Groups}, J. Algebra, {\bf 56}, 343-364, 1979.}


\bibitem{JantzenThesis}{Jens Carseten Jantzen, \emph{Darstellungen halbeinfacher Gruppen und zugeordnete kontravariante Formen}, Bonn 1973.}


\bibitem{RAG}{Jens Carsten Jantzen, \emph{Representations of Algebraic Groups}, second ed., Math. Surveys Monogr., vol 107, Amer. Math. Soc., 2003.}

%\bibitem{KN}{A. S. Kleshchev and D. K. Nakano, \emph{Comparing the Cohomology of General Linear Groups and Symmetric Groups}, Pac. Jour. of Math. Vol. 201, No. 2 (2001), 339-355}


%\bibitem{Peel}{M. H. Peel, \emph{Hook Representations of the Symmetric Groups}, Glasgow Math. J., 12 (1971), 136-149}


%\bibitem{Klesh1}{A. S. Kleshchev, \emph{Branching rules for Modular Representations of Symmetric Groups, II}, J. Reine. Angew. Math. {\bf459} (1995), 163-212}

%\bibitem{Klesh2}{A. S. Kleshchev, \emph{Branching rules for Modular Representations of Symmetric Groups III: Some Corollaries and a Problem of Mullineux }, J. London. Math. Soc {\bf54} (1996), 25-38}


%\bibitem{Klesh3}{A. S. Kleshchev, \emph{On Decomposition Numbers and Branching Coefficients for Symmetric and Special Linear Groups }, Pro. London Math. Soc. (3) {\bf 75} (1997), 497-558}


%\bibitem{K}{L. Krop, \emph{On the representations of the full matrix semigroup on homogeneous polynomials, I}. J.   Algebra, {\bf 99}, 284-300, 1986.}

%\bibitem{Mac}{I. G. Macdonald, \emph{Symmetric Functions and Hall Polynomials}, 2nd Ed., Oxford Mathematical Monographs, Oxford University Press 1998.}


\bibitem{MathasBook} {Andrew Mathas, \emph{Iwahor-Hecke Algebras and Schur Algebras of the Symmetric Group},  University Lecture Serires,  15,  Amer. Math. Soc. 1999.}

%\bibitem{Mar} {S. Martin, \emph{Schur Algebras and Representation Theory}, Cambridge Tracts in Mathematics 11,  Cambridge University Press 1993.}

% \bibitem{Mull}{G.  Mullineux,\emph{ Bijections of $p$-regular partitions and $p$-modular irreducibles of the symmetric groups}, J. London Math. Soc.,  (2) {\bf20}, 60-66, 1979.}
 
 
 %\bibitem{Sull}{J. B. Sullivan, \emph{Some representation theory for the modular general linear groups}, J. Algebra {\bf 45}, 516-535, 1977.}


\end{thebibliography}
\end{document}